\theoremstyle{plain}
\newtheorem{lemma}{Lemma}[section]
\newtheorem{proposition}[lemma]{Proposition}
\newtheorem{question}[lemma]{Question}
\theoremstyle{definition}
\newtheorem{definition}[lemma]{Definition}
\newtheorem{example}[lemma]{Example}
\newtheorem{remark}[lemma]{Remark}
\newtheorem*{definition*}{Definition}
\theoremstyle{remark}
\newcommand{\lb}{\left[ \cdot\,,\cdot\right] }
\title{On longitudinal differential operators and Nash blowups}
\author{Ruben Louis\thanks{Department of Mathematics, University of Illinois Urbana-Champaign, Urbana, IL, USA.\\Email: \texttt{rlouis@illinois.edu}}}
\date{\today}
\begin{document}

\maketitle

\begin{abstract}
    In this short note, we describe the Helffer–Nourrigat cone \cite{Helffer1980, androulidakis2022pseudodifferential} of a singular foliation in terms of the Nash algebroid associated to the foliation \cite{mohsen2021blow, louis2023series}. Along the way, we show that the Helffer–Nourrigat cone is a union of symplectic leaves of the canonical Poisson structures on the dual of the holonomy Lie algebroids. We also provide, within this framework, a characterization of longitudinally elliptic differential operators on a singular foliation $\mathcal{F}$, generalizing results previously known in the literature.
\end{abstract}

\section*{Introduction}
A differential operator of rank $k\in \mathbb N$ on a manifold $M$ is a local endomorphism $D$ of the algebra of smooth functions $\mathcal{O}_M$ of $M$, which, in local coordinates, can be expressed as a  $\mathcal{O}_M$-linear combination  of compositions of vector fields on $M$, i.e.,
\[{\displaystyle D=\sum _{|\ell |\leq k}f_{\ell }(x)\frac {\partial ^{|\ell|}}{\partial x_{1}^{\ell _{1}}\partial x_{2}^{\ell_{2}}\cdots \partial x_{q}^{\ell_{q}}}\,}\] where for $q\in \mathbb N$,  $\ell = (\ell_1 , \ell_2 , \cdots, \ell_q)$ is a multi-index of natural numbers and $|\ell| = \ell_1 + \ell_2 +\cdots + \ell_q $. To every differential operator $D$ of degree $k$, and any choice of local coordinates, one associates its symbol $\sigma_D\in \Gamma(S^{\leq k}(TM))$ which is a function on $T^*M$ that is given in local chart by the expression: \[{\displaystyle \sigma_D(m,v )=\sum _{|\ell |\leq k}f_{\ell}(x) v_1^{\ell_{1}}\odot\cdots\odot  v_q^{\ell_{q}}}.\] The component of the highest degree of the symbol $\sigma_D(m,v)$, that is, \( \sigma (m,v)=\sum _{|\ell |=k}f_{\alpha}(x)v_1^{\ell_{1}}\odot\cdots\odot  v_k^{\ell_{k}}\), does not depend on the local coordinate charts when seen as a homogeneous polynomial function of degree $k$ on $T^*M$; it is called the \emph{principal symbol} of $D$ \cite{Hormander1998}. {Differential operators of order $k$ are also defined on Lie groupoids as left-invariant differential operators, that is, differential operators valued in Lie algebroids} $A\to M$ \cite{NistorPingAlan,bekaert2023geometric}, in particular, we look at differential operators on a foliated manifold, that is, a manifold $M$ together with a regular foliation $\mathcal{F}$, which we see as a Lie algebroid $A\subseteq TM$ with injective fiber-wise anchor map. These differential operators are called longitudinal differential operators, as they are compatible with the leaf decomposition of $(M, \mathcal{F})$. The symbol in this case is a section of $S^{\leq k}(A)$, that is, a function on $A^*$ \cite{Connes1982ASO,NistorPingAlan, zbMATH07369649}.

In this paper, we are interested in longitudinal differential operators on a singular foliation, that is, on a submodule of vector fields of $\mathfrak X(M)$ that is closed under the Lie bracket and locally finitely generated. More precisely, as in \cite{LLL1}, we unify \cite{Hermann,Cerveau,AndroulidakisIakovos, Debord,LLS} in smooth differential geometry and \cite{zbMATH03423310,Ali-Sinan} in holomorphic differential geometry by defining a singular foliation on a smooth, complex, algebraic, or real analytic manifold $M$, with sheaf of functions $\mathcal O_M$, to be a subsheaf $\mathcal F \colon U\longrightarrow\mathcal F(U )$ of the sheaf of vector fields $\mathfrak X$, which is closed under the Lie bracket and locally finitely generated as an $\mathcal O_M$-module. 
By Hermann's theorem \cite{Hermann, Sussmann1, Sussmann2,Stepan1, Stepan2}, this is enough to induce a partition of the manifold $M$ into immersed submanifolds of possibly different dimensions, called \emph{leaves} of the singular foliation. Singular foliations generalize the notion of regular foliations by allowing leaves with possibly different dimensions. They appear, for instance, as orbits of Lie group actions, or, more generally, as the orbits of a Lie groupoid or the image of the anchor map of a Lie algebroid. 

For a regular foliation $F\subseteq TM$, longitudinal differential operators coincide with the universal enveloping algebra of the Lie algebroid $F$ {in the sense of \cite[\S 3]{NistorPingAlan} or \cite[Definition 1.3]{Moerdijk2010}}. More generally, for a Debord foliation $\mathcal{F}\subseteq \mathfrak X(M)$, i.e., $\mathcal{F}$ is isomorphic to the module of sections of a Lie algebroid whose anchor map is injective on an open dense subset, this is still true. This is \underline{not} the case for a generic singular foliation. 
We investigate the following question : 
\begin{question}\label{thm:symbol}
What is the symbol of a longitudinal differential operator on a singular foliation $\mathcal F$?
\end{question}
Androulidakis and Skandalis in \cite{zbMATH05843246} gave an answer that is too involved to be dealt with here, using the $C^*$-algebra of half densities of the holonomy Lie groupoid.
Later on, Mohsen, then Androulidakis, Mohsen and Yuncken's \cite{mohsen2021blow}-\cite{androulidakis2022pseudodifferential} gave a second elaborate answer involving representations, more involved than the one considered in the present paper. See also \cite{debord2025hypoellipticit}. In the process, they showed that Mohsen's blowup \cite{mohsen2021blow} provides an answer to Question \ref{thm:symbol} sufficient for several purposes, including dealing with pseudo-differential operator calculus developed by Androulidakis and Skandalis in \cite{zbMATH05843246}, using the holonomy Lie groupoid and bisubmersions. We present an interpretation of this answer. {In \cite{louis2024nash,louis2023series}, an interpretation of Mohsen's blow-up is provided in terms of an anchored bundle \( A \to TM \) over a singular foliation \( \mathcal{F} \), that is, a bundle \( A \) satisfying \( \rho(\Gamma(A)) = \mathcal{F} \). It is shown that the associated blow-up map \( p : M_{\mathcal{F}} \to M \) has the property that the pullback foliation \( p^{-1}(\mathcal{F}) \) becomes a Debord foliation on \( M_{\mathcal{F}} \).} In other words, $p^{-1}(\mathcal{F})$ is the image of the anchor map of a Lie algebroid over $M_\mathcal{F}$, denoted \( \mathfrak{D}_{\mathcal{F}}\), whose anchor is injective on a dense open subset of \( M_{\mathcal{F}} \). We refer to Mohsen's blowup as the \emph{Nash blow-up} of \( \mathcal{F} \), because it clearly belongs to that family and matches the blowups of \cite{Rossi,Ali-Sinan, JPTS} for coherent sheaves. The latter is the Lie algebroid of Mohsen's groupoid \cite{mohsen2021blow}.

This construction allows us to define the symbol of a longitudinal operator on \( \mathcal{F} \) either as a function on the dual of the Lie algebroid \( \mathfrak{D}_{\mathcal{F}} \) or as a function of the Helffer-Nourrigat cone. The first one being the pull-back of the second one. This symbol is well defined and is related to the leafwise symbol associated to the holonomy Lie algebroids of the leaves of \( \mathcal{F} \) via the so-called \emph{Helffer--Nourrigat cone} \cite{Helffer1980,androulidakis2022pseudodifferential}, which we describe in these terms in the text \S \ref{sec:Helffer}. In this framework, we prove that if \( \mathcal{F} \) is the image of a Lie algebroid \( A \), then the Helffer--Nourrigat cone is a union of symplectic leaves of the linear Poisson structure on \( A^* \).

{The symbol considered in the present paper should be viewed as a geometric incarnation of the principal symbol appearing in the pseudodifferential calculus of Androulidakis-Mohsen-Yuncken \cite{androulidakis2022pseudodifferential}. In \cite{androulidakis2022pseudodifferential}, the principal symbol of a longitudinal operator is defined through representations of the $C^*$-algebra of the holonomy groupoid and is naturally evaluated on the Helffer–Nourrigat cone. In our framework, the Nash algebroid $\mathfrak D_\mathcal F \to M_\mathcal F$ provides a resolution of this cone: Proposition \ref{prop:NHandNash} identifies the dual bundle $\mathfrak D_\mathcal F^*$ with a desingularization of the Helffer-Nourrigat cone, and Proposition \ref{prop:pullbackHN} shows that the symbol obtained through the Nash algebroid is exactly the pullback of the leafwise symbol restricted to that cone. Thus, while our definition does not reconstruct the full analytic machinery of \cite{androulidakis2022pseudodifferential}, it provides a geometric model for the support on which the Androulidakis-Mohsen-Yuncken principal symbol lives.
}

The paper is structured as follows: In \ref{sec:SF}, we recall the notions of singular foliations, the Nash blow-up of a singular foliation, and define the Helffer–Nourrigat cone of a singular foliation in terms of anchored bundles, relating it to the Nash blow-up.  In \ref{sec:LDO}, we discuss various definitions of the symbol of a longitudinal operator on a singular foliation and study the relationships between them.


\section{Singular foliations}\label{sec:SF}
Throughout the paper, $M$ stands for a smooth/analytic/complex manifold or an algebraic variety over $\mathbb K\in \{\mathbb R, \mathbb C\}$. We denote by $\mathcal{O}_M$ the algebra of smooth/holomorphic or polynomial functions on $M$ depending on the context. In addition, $\mathbb K$ stands for $\mathbb R$ or $\mathbb C$.\\

Here, following \cite{Hermann,Cerveau,Debord,LLS} and \cite{zbMATH03423310,Ali-Sinan} or \cite{LLL1}, we define
\begin{definition}\label{def:singfoliation}
    A \emph{singular foliation} on a smooth, real analytic, or complex manifold $M$   is a subsheaf $\mathcal{F}\subseteq\mathfrak{X}(M)$ of modules over functions that fulfills the following conditions
\begin{enumerate}
     \item \textbf{Stability under Lie bracket}: $[\mathcal{F},\mathcal{F}]\subseteq \mathcal{F}$.
     \item \textbf{Local finite generateness}: every $m\in M$ admits an
open neighborhood $\mathcal{U}\subseteq M$ together with a finite number of vector fields $X_1, \ldots, X_r \in \mathcal F$ such that for
every open subset $\mathcal V \subseteq \mathcal U$ the vector fields ${X_1}_{|_\mathcal{V}} ,\ldots, {X_r}_{|_\mathcal{V}}$ generate $\mathcal{F}$ on $\mathcal{V}$ as a $\mathcal{O}_\mathcal{V}$-module.
 \end{enumerate}
 \end{definition}One of the important consequences of the above definition in the smooth/real analytic/complex cases (see \cite{Hermann} also \cite[\S 1.7]{LLL1}) is that singular foliation admits leaves : There exists a partition of $M$ by immersed submanifolds $L\subseteq M$ called \emph{leaves} such that for all $m\in M$, the image of the evaluation map $\mathcal F \to T_m M  $  is the tangent space $T_mL$ of the leaf $L$ containing $m$. We denote by $M/\mathcal{F}$ the space of leaves of $\mathcal{F}$.
     
 \begin{example} \label{ex:SF}Here are some important classes of singular foliations.
    \begin{enumerate}
        \item One very important source of examples of singular foliations  is Poisson geometry. Every Poisson manifold $(M,\pi)$ comes equipped with a singular foliation $\mathcal{F}_\pi$ on $M$.  Recall that a Poisson structure on a manifold $M$ is a bivector field $\pi=\{\cdot\,,\cdot\}\in \mathfrak X^2(M)$ fulfilling $[\pi, \pi]_{\mathrm{NS}}=0$ where $[\cdot\,,\cdot]_{\mathrm{NS}}$ is the Schouten–Nijenhuis bracket on multivector fields. For every function $h\in \mathcal O_M$ we associate a vector field $X_h:=\pi^\#(dh)=\{\cdot\,,h\}\in\mathfrak X(M)$ called the \emph{Hamiltonian} vector field of $h$. One has that $[X_h, X_g]=X_{\{g,h\}}$ for all $g,h\in \mathcal O_M$. This implies that the $\mathcal{O}_M$-module $\mathcal{F}_\pi\subset \mathfrak X(M)$ generated by the Hamiltonian vector fields is a singular foliation on $M$. It is well-known that the leaves of $\mathcal{F}_\pi$ are symplectic manifolds \cite{LPV, CFM, LLL1}. Another related class of examples of singular foliations are provided by twisted-Poisson structures, i.e.,
$[\pi ,\pi ]_{\mathrm{NS}}=\wedge ^{3}\pi ^{\#}(\alpha )$, where
$\alpha \in \Omega ^{3}(M)$ is a closed 3-form \cite{SW}. More generally, one may also consider foliated bivector vector fields \cite{Turki}. 

\item Lie algebroids form a large class of examples of singular foliations. Recall that the Lie algebroid is a triple $(A, [\cdot\,,\cdot]_A,\rho)$ where $(\Gamma(A),[\,\cdot,\cdot]_A)$ is a Lie algebra and $\rho\colon A\to TM$ is a vector bundle morphism (referred to as \emph{the anchor map}) that satisfies the Leibniz rule, i.e., for all $a,b\in \Gamma(A), f\in \mathcal{O}_M$, $[a,fb]_A=\rho(a)[f]b+f[a,b]_A$. The image $\rho(\Gamma(A))\subset \mathfrak X(M)$ of the anchor map generates a singular foliation on $M$. One should notice that the singular foliation that is given by the symplectic leaves of a Poisson manifold $(M, \pi)$ is a subclass of singular foliations arising from Lie algebroids as  $\mathcal{F}_\pi$ is generated by the image of the cotangent Lie algebroid $\pi^\#\colon T^*M\to TM$. See e.g., \cite[\S 2.2]{CFM}. 
\item Another important class of singular foliations is the so-called Debord foliations. A singular foliation $\mathcal F$ is \emph{Debord} if it is projective as a module over functions on $M$, equivalently  if and only if there exists a Lie algebroid $(A, \lb_A, \rho)$ such that $\rho(\Gamma(A))=\mathcal F$ whose anchor is injective on an open dense subset. In particular, Debord foliations are globally finitely generated projective submodules of $\mathfrak X(M)$. Conversely, given any finitely generated singular foliation $\mathcal{F} \subseteq \mathfrak{X}(M)$ that is projective as a $\mathcal{O}_M$-module, there exists a vector bundle $A$ such that $\Gamma(A) \simeq \mathcal{F}$, by the Serre–Swan Theorem \cite{SwanRichardG, MoryeArchanaS}. This vector bundle carries a natural Lie algebroid structure whose anchor map $A \to TM$ is injective on an open dense subset.  One of the key properties of Debord foliations is that they are always integrable to smooth Lie groupoids, in contrast to arbitrary Lie algebroids, which may not be integrable \cite{Debord, Crainic-Fernandes}. Debord Lie algebroids are naturally the class of Lie algebroids closest to regular foliations, that is, to vector subbundles $F\subseteq TM$ such that $\left [\Gamma(F), \Gamma(F)\right]\subset \Gamma(F)$. 
\item There are examples of singular foliations that do not arise from natural Lie algebroids even locally (in \cite[Lemma 1.3]{AZ}). We even have an embarrassing example where the question of the existence of a Lie algebroid is still open: On $M=\mathbb R^2$ take $\mathcal{F}\subset \mathfrak X(\mathbb R^2)$ to be the singular foliation generated by the vector fields that vanish at order two, namely $\mathcal F$ is globally generated by the vector fields \[x^2\frac{\partial }{\partial x},\, y^2\frac{\partial }{\partial x},\, xy\frac{\partial }{\partial x},\, x^2\frac{\partial }{\partial y},\, y^2\frac{\partial }{\partial y},\, xy\frac{\partial }{\partial y}.\]
It is not known whether $\mathcal F$ is the image of a Lie algebroid. However, it is the image of the anchor map of a universal Lie $2$-algebroid. See \cite{LLS}.
    \end{enumerate}
 \end{example}

\subsection{Anchored vector bundles over a singular foliation}\label{sec:kernel}
We mentioned in \S \ref{sec:SF}, Example \ref{ex:SF} (4) that not all singular foliations arise from Lie algebroids. However, if $\mathcal F$ is globally finitely generated, then it is the image of an anchored bundle. We refer the reader to \cite[\S 2.1.1]{LLL2} for a complete discussion of this matter. 
\begin{definition}
    Let $(M, \mathcal{F})$ be a singular foliation. A vector bundle morphism $\rho\colon A\to TM$ is said to be an \emph{anchored bundle over $\mathcal F$} if $\rho(\Gamma(A))=\mathcal F$.
\end{definition}


Let \(A\to TM\) be an anchored bundle over a singular foliation $\mathcal{F}$. There is an almost-Lie algebroid structure\footnote{That is, the triple $(A, [\,\cdot,\cdot]_A, \rho)$ satisfies all the axioms of a Lie algebroid except Jacobi identity. See e.g., \cite{popescu2019almost}.} $[\,\cdot,\cdot]_A$ on $A$ (which always exists in the smooth case, and locally in other cases; we owed this to Marco Zambon). Recall \cite[\S 2.3.1]{LLL2} that the {strong-kernel} $\mathrm{Sker}\,\rho_m\subseteq A_m$ of $\rho$ at $m\in M$ is the subvector space of $\ker\rho_m\subseteq A_m$ defined as $u\in A_m$ such that there is an open neighborhood $\mathcal{U}$ of $m$ and a local section $\Tilde{u}\in\Gamma(A)|_{\mathcal{U}}$ through $u$ so that $\rho(\Tilde{u})=0$. This allows us to define several objects that are canonically attached to $\mathcal F$\begin{enumerate}
    \item The quotient
 $$ \mathfrak g_m (\mathcal F) =
 \frac{\ker\rho_{m}}{\mathrm{Sker}\,\rho_m}$$ is a Lie algebra that is called the
 \emph{isotropy of $\mathcal F$ at $m$}. The latter is isomorphic to the usual definition of isotropy Lie algebra $\frac{\mathcal{F}(m)}{\mathcal{I}_m\mathcal{F}}$ of a singular foliation of Androulidakis-Skandalis \cite{AndroulidakisIakovos}. Here $\mathcal{\mathcal{F}}(m)=\{X\in \mathcal{F}\;|\; X(m)=0\}$ and $\mathcal{I}_m=\{f\in C^\infty(M)\;|\; f(m)=0\}$.
    \item The restriction $\mathrm{Sker}\rho|_{L}$ of the strong-kernel $\mathrm{Sker}\rho$ to a leaf $L\in M/\mathcal{F}$ is a vector bundle  over $L$, and the quotient vector bundle \[A_L:=\frac{A|_L}{\mathrm{Sker}\rho|_L}\to L\] comes equipped with a natural Lie algebroid structure over $L$. For every point $m\in M$ and any open neighborhood $\mathcal{U}\subseteq M$ of $m$ such that the connected component $(L\cap \mathcal{U})_m$ of $m$ in $L\cap \mathcal{U}\hookrightarrow \mathcal{U}$ is embedded in $\mathcal{\mathcal U}$ there exists an isomorphism \begin{equation}
        \label{eq:holonomyalg}\Gamma(A|_{(L\cap \mathcal{U})_m})\simeq \frac{\mathcal{F}|_\mathcal{U}}{\mathcal{I}_{(L\cap \mathcal{U})_m}\mathcal{F}|_\mathcal
 U}.\end{equation} Here,  $\mathcal{I}_{(L\cap \mathcal{U})_m}$ the ideal of functions vanishing
on $\mathcal{I}_{(L\cap \mathcal{U})_m}$. Since the right-hand side of Equation \eqref{eq:holonomyalg} does not make any reference to the
anchored bundle $A\to TM$, it means that $A_L$ does not depend on the choice of a particular anchored bundle. Notice that the anchored bundle $A$ does not need to be defined on the whole manifold of $M$, just on open neighborhoods of points in $L$. See \cite[\S2.3.6]{LLL2}.
\end{enumerate}

Although a singular foliation $(M, \mathcal{F})$ may not be the image of a Lie algebroid over $M$, it always admits a transitive Lie algebroid longitudinally, that is, along the leaves of $\mathcal{F}$. 

\begin{proposition}\cite{AndroulidakisIakovos, AZ}
    Let $\mathcal{F}$ be a singular foliation over $M$ and $L$ a leaf of $\mathcal F$. There exists a transitive Lie algebroid (referred to as the \emph{holonomy Lie algebroid of $\mathcal{F}$ on $L$}) $A_L\to L$ whose image of anchor map $\rho_L\colon  A_L \to TL$ is $\mathcal{F}|_L$. Moreover, 
    \begin{enumerate}
        \item if $L\subset M$ is embedded, then the $C^\infty(L)$-module $\Gamma(A_L)$ of sections of $A_L$  is isomorphic to $\frac{\mathcal{F}}{\mathcal{I}_L\mathcal{F}}$, where $\mathcal{I}_L$ is the ideal of functions vanishing on $L$.
        \item there is a short exact sequence:
 $$  \xymatrix{ \mathfrak g_L(\mathcal F)\ar@{^(->}[r]& A_L \ar^{\rho_L}@{->>}[r]& TL}$$
where $\mathfrak{g}_L(\mathcal F)=\bigsqcup_{m\in L}\mathfrak{g}_m(\mathcal F)$ is a bundle of Lie algebras over $L$ whose fiber at $m \in L$ is the isotropy Lie algebra $\mathfrak g_m (\mathcal F)$. 
    \end{enumerate}
\end{proposition}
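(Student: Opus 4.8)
The plan is to build $A_L$ as a leafwise quotient of a local anchored bundle and then glue the pieces using the intrinsic description \eqref{eq:holonomyalg}. Fix $m\in L$ and a neighbourhood $\mathcal{U}$ on which $\mathcal{F}$ is finitely generated; this yields an anchored bundle $\rho\colon A\to T\mathcal{U}$ over $\mathcal{F}|_{\mathcal{U}}$ carrying an almost-Lie algebroid bracket $[\cdot,\cdot]_A$ (globally in the smooth case, after shrinking $\mathcal{U}$ otherwise). Since every element of $\mathcal{F}$ is tangent to the leaves, $\rho(A|_L)\subseteq TL$, and $\mathrm{Sker}\,\rho|_L\subseteq\ker\rho|_L$; by the smoothness results of \cite{AndroulidakisIakovos, androulidakis2013smoothness} the subsheaf $\mathrm{Sker}\,\rho|_L$ is a genuine subbundle of $A|_L$, so $A_L:=A|_L/\mathrm{Sker}\,\rho|_L\to L\cap\mathcal{U}$ is a vector bundle and $\rho$ descends to a bundle map $\rho_L\colon A_L\to TL$.

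For the bracket I would use the identification \eqref{eq:holonomyalg}: when $L$ is embedded, $\Gamma(A_L)\cong\mathcal{F}/\mathcal{I}_L\mathcal{F}$. Here $\mathcal{I}_L\mathcal{F}$ is a Lie ideal of $(\mathcal{F},[\cdot,\cdot])$, because for $X,Y\in\mathcal{F}$ and $f\in\mathcal{I}_L$ one has $[X,fY]=X[f]\,Y+f[X,Y]$, and $X[f]\in\mathcal{I}_L$ since $X$ is tangent to $L$ and $f$ vanishes on $L$; hence $[X,fY]\in\mathcal{I}_L\mathcal{F}$. Therefore $\mathcal{F}/\mathcal{I}_L\mathcal{F}$ inherits from the Lie bracket of vector fields an honest Lie bracket over $C^\infty(L)$ — the Jacobi identity being automatic for a quotient Lie algebra — together with the Leibniz rule relative to $\rho_L$; transporting this structure through \eqref{eq:holonomyalg} makes $A_L$ a Lie algebroid over $L\cap\mathcal{U}$ with anchor $\rho_L$. (An alternative verification, not needing $L$ embedded, is that the Jacobiator of $[\cdot,\cdot]_A$ is $\mathcal{O}$-linear in each slot and killed by $\rho$, hence valued pointwise in $\mathrm{Sker}\,\rho$, so it vanishes on $A_L$.)

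Transitivity and the exact sequence are then immediate. At $m\in L$ the map $\rho_L$ is onto $T_mL$ by the defining property of the leaf ($\rho(\mathcal{F})$ evaluated at $m$ is $T_mL$), so $\rho_L$ has constant rank $\dim L$ and $A_L$ is transitive. Its kernel at $m$ is $\ker\rho_m/\mathrm{Sker}\,\rho_m=\mathfrak g_m(\mathcal F)$; these assemble into a subbundle $\mathfrak g_L(\mathcal F)=\bigsqcup_{m\in L}\mathfrak g_m(\mathcal F)$ (constant rank, since $\rho_L$ does), on which the bracket restricts because $\ker\rho_L$ is an ideal, giving $\mathfrak g_L(\mathcal F)\hookrightarrow A_L\twoheadrightarrow TL$. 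Finally, since the right-hand side of \eqref{eq:holonomyalg} makes no reference to the chosen anchored bundle $A$, the local models over different neighbourhoods are canonically isomorphic on overlaps and satisfy the cocycle condition, so they glue to a global Lie algebroid $A_L\to L$; for non-embedded $L$ one works throughout with the embedded connected pieces $(L\cap\mathcal{U})_m$, which is why statement (1) is phrased only for embedded leaves.

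I expect the main obstacle to be precisely the smoothness input used in the first step — that $\mathrm{Sker}\,\rho|_L$, equivalently the function $m\mapsto\dim\mathfrak g_m(\mathcal F)$, has locally constant rank along $L$, so that $A_L$ is honestly a vector bundle. This does not follow from soft algebra and is exactly where \cite{AndroulidakisIakovos, androulidakis2013smoothness} are genuinely needed; the remaining steps are bookkeeping around the quotient construction.
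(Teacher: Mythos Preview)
The paper does not supply its own proof of this proposition: it is stated with a citation to \cite{AndroulidakisIakovos, androulidakis2013smoothness} and no proof environment follows. So there is nothing to compare your argument against in the paper itself; what precedes the proposition is only the description of $A_L$ as $A|_L/\mathrm{Sker}\,\rho|_L$ together with the isomorphism \eqref{eq:holonomyalg}, which you have correctly taken as the backbone of your construction.

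Your sketch is sound. The computation that $\mathcal{I}_L\mathcal{F}$ is a Lie ideal is the right way to obtain an honest Lie bracket on $\mathcal{F}/\mathcal{I}_L\mathcal{F}$, and transporting it through \eqref{eq:holonomyalg} is exactly how the holonomy Lie algebroid is built in the references. Your alternative route via the Jacobiator is also correct, but note that it silently uses that the almost-Lie bracket can be chosen so that $\rho$ intertwines brackets (i.e.\ $\rho([a,b]_A)=[\rho(a),\rho(b)]$); without this you cannot conclude $\rho\circ J=0$. This choice is always possible for an anchored bundle over a singular foliation, but it is worth stating. The gluing step is handled correctly by appealing to the intrinsic nature of the right-hand side of \eqref{eq:holonomyalg}. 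Finally, you are right that the only genuinely non-formal ingredient is the constancy of $\dim\mathfrak g_m(\mathcal F)$ along $L$ (equivalently, that $\mathrm{Sker}\,\rho|_L$ is a subbundle); this is precisely the content borrowed from \cite{AndroulidakisIakovos, androulidakis2013smoothness}, and you have flagged it appropriately.
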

\subsection{Nash-blowup of a singular foliation}\label{sec:Nash}  In the sequel, we assume that \begin{enumerate}
\item[(a)] our singular foliation $\mathcal{F}$ is finitely generated so that there is an anchored bundle $\rho\colon A\to TM$ so that $\rho(\Gamma(A))=\mathcal{F}$.
    \item[(b)] the regular leaves of  $\mathcal F$ are all of the same  dimension $r\in \mathbb N$.
\end{enumerate}Recall \cite{mohsen2021blow,louis2024nash,louis2023series} that the \emph{Nash blow-up} $\left(M_\mathcal{F}, p^{-1}(\mathcal{F})\right)$ of a singular foliation $(M,\mathcal{F})$ is such that \begin{enumerate}
    \item $p\colon M_\mathcal{F}\to M$ is proper and surjective,
    \item $p^{-1}(M_{\mathrm{reg}})$ is dense in $M_\mathcal{F}$ and the restriction of $p$ to $p^{-1}(M_{\mathrm{reg}})$ is a bijection,
    \item the pullback $p^{-1}(\mathcal{F})$ of the singular foliation $\mathcal{F}$ along $p$ exists, \begin{enumerate}
        \item and it is Debord, i.e., $p^{-1}(\mathcal{F})$ is the image through the anchor map of a Lie algebroid\footnote{The pullback $p^{-1}(\mathcal{F})$ and the Lie algebroid $D_\mathcal
    F$ make sense even if $M_\mathcal{F}$ is not a smooth manifold \cite{louis2024nash,louis2023series}.} $\mathfrak{D}_\mathcal{F}$ (referred here as \emph{Nash algebroid of $\mathcal{F}$}) over $M_\mathcal{F}$ whose anchor map is injective on the open dense subset $p^{-1}(M_{\mathrm{reg}})$. For every $X\in \mathcal{F}$, the pullback of $X$ along $p$ exists, and we denote it by $p^!X\in p^{-1}(\mathcal F)$, it is the unique vector field on $M_\mathcal
    F$ that is $p$-projectable to $X$.
    \item $\mathfrak D_\mathcal{F}$ sits in an exact sequence of vector bundles over $M_\mathcal{F}$\begin{equation}\label{eq:exactsequence}
        0\to T\to p^*A\to \mathfrak D_\mathcal{F}\to 0.
    \end{equation}If $A$ has a Lie algebroid structure, the sequence \eqref{eq:exactsequence} is an exact sequence of Lie algebroids and $T$ is a Lie algebra bundle \cite{louis2024nash}.
    \end{enumerate}
\end{enumerate} The \emph{Nash blow-up space} $M_\mathcal{F}$  (which depends only on $\mathcal{F}$) is the closure $ \overline{\sigma(M_{\mathrm{reg}})}\subset \mathrm{Grass}_{-r}(A)$ of the image of the section \[\sigma\colon M_{\mathrm{reg}}\longrightarrow \mathrm{Grass}_{-r}(A),\, x\longmapsto \ker\rho_x\]
where $P\colon \mathrm{Grass}_{-r}(A)\to M$ is the Grassmann bundle of vector subspace of codimension $r$. Moreover $p\colon M_\mathcal{F}\to M$ is the restriction of $P$ to $M_\mathcal{F}$.


\begin{remark}
\label{rmk:limitesGrassmann}
 \begin{enumerate}
     \item Intuitively, for $x\in M$, $p^{-1}(x)=M_\mathcal{F}\cap P^{-1}(x)$ is the set of all possible limits of the subspaces $\ker \rho_{y}$ when $y\in M_{\mathrm{reg}}$ converges to $x$.
More precisely, for any $x\in M$, there is an open neighborhood $\mathcal U\subset M$ of $x$ such that $\mathrm{Grass}_{-r}(A)\simeq \mathcal{U}\times \mathrm{Grass}_{-r}(\mathbb{K}^{\mathrm{rk}(A)})$. By construction,   \begin{equation*}
        p^{-1}(x)=\left\{V\subset A_{_x}\;\middle|\; \exists\, (x_n)\in M_{\mathrm{reg}}^\mathbb N,\, \text{such that},\,\;\ker \rho_{x_n} \underset{n \to +\infty}{\longrightarrow} V \text{ as }  x_n\underset{n \to +\infty}{\longrightarrow}x\right\}.
        \end{equation*}
        \item In general, the space \( M_\mathcal{F} \subset \mathrm{Grass}_{-r}(A) \) is not smooth. However, it makes sense to say that the pullback \( p^{-1}(\mathcal{F}) \) is a singular foliation on \( M_\mathcal{F} \), and the latter admits smooth leaves within \( M_\mathcal{F} \)~\cite{louis2023series}. In~\cite{Ali-Sinan} and~\cite[\S 2.1.2]{louis2023series}, certain criteria are provided to determine whether \( M_\mathcal{F} \) is a smooth manifold.
   \end{enumerate}
\end{remark}Some important features of the Nash blow-up $p\colon M_\mathcal{F}\to M$ are the following:\begin{enumerate}
\item {For every $x\in M$ and $V\in p^{-1}(x)$,  $\mathrm{Sker}\,\rho_x\subseteq V\subseteq \ker\rho_x$.} 
\item For every $x\in M$ and $V\in p^{-1}(x)$, the image $ \overline V$ of $V$ of the isotropy Lie algebra , 
$$ \mathfrak g_x (\mathcal F) =
 \frac{\ker\rho_{x}}{\mathrm{Sker}\,\rho_x}$$ is a Lie subalgebra 
 of codimension $r-\dim (\mathrm{Im}\,\rho_x)$.
 Also, $V \to \overline{V} $ is an injective map from $p^{-1}(x) $ to ${\mathrm{Grass}}_{-(r - \dim (\mathrm{Im}\,\rho_x))} (\mathfrak g_x (\mathcal F)  )  $.
\end{enumerate}
\begin{definition}
    For every $m\in M$, the set of Lie algebras $\overline{V}\subset \mathfrak g_m(\mathcal{F})$, $V\in p^{-1}(M)$ is called \emph{the set of limit subalgebras of $\mathcal{F}$ at $m$}.
\end{definition}

 \subsection{The Helffer-Nourrigat cone of a singular foliation}\label{sec:Helffer}
The Helffer–Nourrigat cone, named after Bernard Helffer and Jean Nourrigat in \cite{androulidakis2022pseudodifferential} by Androulidakis, Mohsen, and Yuncken, becomes a fundamental object in noncommutative geometry. It was first implicitly introduced in \cite{Helffer1980}. The latter can be described easily in terms of an anchored bundle $A\to TM$ over the singular foliation $\mathcal F$. {Although this presentation differs formally from that of \cite{androulidakis2022pseudodifferential, debord:hal-05035630}, the underlying geometric object is the same: both descriptions encode the closure of regular annihilator spaces associated with the singular foliation.} 
This description is very similar to the construction of the Nash blow-up \cite{mohsen2021blow,louis2024nash,louis2023series}. We will see that the Helffer-Nourrigat cone is related to the Nash blow-up of the singular foliation.

\begin{definition}
Let $\rho\colon A\to TM$ be an anchored bundle over $ \mathcal F$.
Let $ \rho^* \colon  T^* M \to A^*$ be the dual of the anchor map. The \emph{Helffer-Nourrigat cone, associated with $ (A,\rho)$}, is the closed subset of $ A^*$ given by 
 \[{\mathrm{HN}}(\mathcal F):= \overline{ \cup_{m\in M_{\mathrm{reg}}} \rho_m^* (T_m^*M)}\subset A^*.\]We shall denote by $\mathfrak P\colon{\mathrm{HN}}(\mathcal F) \to M $ the restriction of the projection $A^* \to M$ to the ${\mathrm{HN}}(\mathcal F)$.
\end{definition}
Of course, the notation ${\mathrm{HN}}(\mathcal F)$ which does not mention the anchored bundle $A\to TM$ will be justified in Proposition \ref{prop:justification}.
\begin{remark}\label{rmk:cone}\begin{enumerate}
\item The closed subset ${\mathrm{HN}}(\mathcal F)\subset A^*$ is indeed a cone, as it is, by construction, stable by multiplication by a scalar.
    \item The Helffer-Nourrigat cone $\mathrm{HN}_{A,\rho}(\mathcal{F})\subset A^*$ can be seen as the union of all $r$-dimensional vector spaces in the closure of the image of the following well-defined section\[C \colon M_{\mathrm{reg}}\to \mathrm{Grass}_{r}(A^*), m\mapsto \mathrm{Im}(\rho^*_m)\]where $P\colon \mathrm{Grass}_{r}(A^*)\to M$ is the Grassmann bundle of $r$-dimensional subspaces in $A^*$. If we denote by $\mathfrak p\colon \overline{C(M_{\mathrm{reg}})}\to M$ the restriction of the projection $P$ to $\overline{C(M_{\mathrm{reg}})}$, we have that \[{\mathrm{HN}}(\mathcal F)=\cup_{m\in M}\mathfrak p^{-1}(m).\]None of the fibers of $\mathfrak p$ are empty because the projection $P$ has compact fibers. Moreover, for every $m\in M_{\mathrm{reg}}$ we have \[\mathfrak p^{-1}(m)=\left\{{\mathrm{Im}}(\rho^*_m)\right\}.\] 
\end{enumerate}
\end{remark}
The following proposition follows from item 2 of Remark \ref{rmk:cone}.
\begin{proposition}Let $\mathfrak P\colon {\mathrm{HN}}(\mathcal F) \to M $ be the Helffer-Nourrigat cone of a singular foliation associated with an anchored bundle $\rho\colon A\to TM$.
\begin{enumerate}
\item For every $m \in M$, the fiber of $\mathfrak P$ is non-empty, and is contained in the annihilator of the strong-kernel of $\rho$ at $m$.
\item  $\mathfrak P^{-1}(m)={\mathrm{Im}}(\rho^*_m)={\mathrm{ker}}(\rho_m)^{\circ}$, for every\footnote{We use the symbol $ {}^\circ$ for the annihilator.} $m \in M_{\mathrm{reg}}$.
\item For every $m \in M$, the fiber of $\mathfrak P$ is a union of sub-vector spaces of $ A_m^*$, all of dimension $r$.
\end{enumerate}
\end{proposition}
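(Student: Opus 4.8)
The proof is essentially a direct translation of the three statements into the language of item 2 of Remark~\ref{rmk:cone}, together with the characterizations of the strong kernel already recorded in \S\ref{sec:kernel}. The plan is to unwind the definition $\mathrm{NH}(\mathcal F)=\bigcup_{m\in M}\mathfrak p^{-1}(m)$ and use that each fiber of $\mathfrak p$ is, by construction, a point of the (closed, hence compact-fibered) subspace $\overline{C(M_{\mathrm{reg}})}\subset \mathrm{Grass}_r(A^*)$, so that $\mathfrak P^{-1}(m)$ is a union of $r$-dimensional subspaces of $A_m^*$ indexed by $\mathfrak p^{-1}(m)$.

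First I would prove (3). Fix $m\in M$. Every element of $\mathfrak P^{-1}(m)$ lies, by Remark~\ref{rmk:cone}(2), in some $W\in\mathfrak p^{-1}(m)\subset \mathrm{Grass}_r(A^*_m)$, obtained as a limit $W=\lim_n \mathrm{Im}(\rho^*_{x_n})$ with $x_n\in M_{\mathrm{reg}}$, $x_n\to m$; since $\mathrm{Grass}_r$ is closed under limits each such $W$ is a genuine $r$-dimensional subspace, and $\mathfrak P^{-1}(m)=\bigcup_{W\in\mathfrak p^{-1}(m)}W$ is the asserted union of $r$-dimensional subspaces. Then (2) is the case $m\in M_{\mathrm{reg}}$: there $\mathfrak p^{-1}(m)=\{\mathrm{Im}(\rho^*_m)\}$ is a single point, so $\mathfrak P^{-1}(m)=\mathrm{Im}(\rho^*_m)$, and the identity $\mathrm{Im}(\rho^*_m)=\ker(\rho_m)^{\circ}$ is the standard linear-algebra fact that the image of the transpose of a linear map is the annihilator of its kernel. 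Non-emptiness of the fibers in (1) and (2) follows because $P\colon\mathrm{Grass}_r(A^*)\to M$ has compact fibers, so the projection of the closed set $\overline{C(M_{\mathrm{reg}})}$ is closed and contains the dense open $M_{\mathrm{reg}}$, hence is all of $M$; this is exactly what is recorded in Remark~\ref{rmk:cone}(2).

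Finally, for the containment in (1): again write an arbitrary element of $\mathfrak P^{-1}(m)$ as lying in $W=\lim_n \mathrm{Im}(\rho^*_{x_n})$ with $x_n\in M_{\mathrm{reg}}$. For $x\in M_{\mathrm{reg}}$ we have $\mathrm{Im}(\rho^*_x)=\ker(\rho_x)^{\circ}\subseteq (\mathrm{Sker}\,\rho_x)^{\circ}$ since $\mathrm{Sker}\,\rho_x\subseteq\ker\rho_x$. Now if $\tilde u\in\Gamma(A)$ is a local section with $\rho(\tilde u)\equiv 0$ near $m$, then for every $\xi\in\mathrm{Im}(\rho^*_{x})$ one has $\langle\xi,\tilde u(x)\rangle=0$ for all $x$ in that neighbourhood; passing to the limit along $x_n\to m$ gives $\langle w,\tilde u(m)\rangle=0$ for all $w\in W$, i.e. $W\subseteq(\mathrm{Sker}\,\rho_m)^{\circ}$. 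Taking the union over $W\in\mathfrak p^{-1}(m)$ yields $\mathfrak P^{-1}(m)\subseteq(\mathrm{Sker}\,\rho_m)^{\circ}$, as claimed. The only mildly delicate point is this last limiting argument — making sure that a local section whose image vanishes identically near $m$ really does annihilate every limit hyperplane-configuration — but it is immediate once one works in a local trivialization $\mathrm{Grass}_r(A^*)\simeq\mathcal U\times\mathrm{Grass}_r(\mathbb K^{\mathrm{rk}(A)})$ as in Remark~\ref{rmk:limitesGrassmann}, where convergence in the Grassmannian is convergence of the subspaces and pairing with the fixed vector $\tilde u(m)$ is continuous.
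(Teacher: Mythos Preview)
Your proof is correct and follows exactly the route the paper indicates: the paper does not give a detailed argument but simply states that the proposition follows from item~2 of Remark~\ref{rmk:cone}, and your write-up is a faithful unpacking of that hint together with the definition of the strong kernel. The limiting argument for the containment $\mathfrak P^{-1}(m)\subseteq(\mathrm{Sker}\,\rho_m)^{\circ}$ is the only part not completely implicit in the remark, and you handle it correctly.
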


It is also interesting to see the Helffer-Nourrigat cone leaf-wise, that is, as a subset of $\coprod_{L\in M/\mathcal F}A_L^*$, where $A_L\to M$ is the holonomy Lie algebroid of $\mathcal{F}$ along a leaf $L\in M/\mathcal{F}$.

\begin{proposition}\label{prop:justification}Let $\mathfrak P\colon {\mathrm{HN}}(\mathcal F) \to M $ be the Helffer-Nourrigat cone of a singular foliation associated with an anchored bundle $\rho\colon A\to TM$. For every $m \in L\in M/\mathcal{F}$\begin{enumerate}
    \item The fiber of $\mathfrak P$ over $m$ is canonically included in $\left.A_L^* \right|_{m}$.
    \item The image of the inclusion $\mathfrak P^{-1}(m)\hookrightarrow A_L^*|_m$ is made of the union, of the annihilator of all limit subalgebras in $\mathfrak g_m(\mathcal{F})$, that is \[\mathrm{HN}(\mathcal{F})\simeq\cup_{V\in M_\mathcal{F}}\overline {V}^\circ\]where $\overline V\subset A_L|_m=\frac{A_m}{\mathrm{Sker \rho}_m}$ is the class of $V\subset \ker\rho_m$ in $\mathfrak g_m(\mathcal{F})$.
\end{enumerate}
In particular, $\mathrm{HN}(\mathcal{F})$ does not depend on the anchored bundle chosen in the construction.  
\end{proposition}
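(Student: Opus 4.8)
The plan is to reduce everything to a fiberwise, over-a-single-point analysis, and then use the already-established description of the Nash blow-up fiber $p^{-1}(m)$ in terms of limit subspaces. Fix a leaf $L$ and a point $m\in L$. The first step is part (1): to produce the canonical inclusion $\mathfrak P^{-1}(m)\hookrightarrow A_L^*|_m$. Recall from the features of the Nash blow-up that for every $V\in p^{-1}(m)$ one has $\mathrm{Sker}\,\rho_m\subseteq V\subseteq\ker\rho_m$; dualizing, $V^\circ\subseteq(\mathrm{Sker}\,\rho_m)^\circ$. Since by part (1) of the preceding proposition $\mathfrak P^{-1}(m)$ is contained in the annihilator $(\mathrm{Sker}\,\rho_m)^\circ\subseteq A_m^*$, and since $A_L|_m=A_m/\mathrm{Sker}\,\rho_m$ has dual $A_L^*|_m\cong(\mathrm{Sker}\,\rho_m)^\circ\subseteq A_m^*$ canonically, we get the desired inclusion $\mathfrak P^{-1}(m)\hookrightarrow A_L^*|_m$. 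The only thing to check is that this identification is compatible with the isomorphism \eqref{eq:holonomyalg}, i.e. does not depend on $A$ — but that is exactly the content of that equation, whose right-hand side is intrinsic, so part (1) follows.

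For part (2) I would argue by a continuity/limit comparison in the two Grassmann bundles $\mathrm{Grass}_{-r}(A)$ and $\mathrm{Grass}_r(A^*)$. The key observation is that the linear-algebra duality $W\mapsto W^\circ$ is a diffeomorphism $\mathrm{Grass}_{-r}(A)\xrightarrow{\ \sim\ }\mathrm{Grass}_r(A^*)$ covering the identity on $M$, and that it is continuous, hence commutes with taking closures. Over a regular point $x\in M_{\mathrm{reg}}$ the section $\sigma(x)=\ker\rho_x$ of $\mathrm{Grass}_{-r}(A)$ is sent under this duality precisely to $C(x)=\mathrm{Im}(\rho_x^*)=(\ker\rho_x)^\circ$ (using that $\rho_x$ has rank $r$ there, so $\mathrm{Im}\,\rho_x^*$ is $r$-dimensional and equals $(\ker\rho_x)^\circ$). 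Therefore the duality restricts to a homeomorphism $\overline{\sigma(M_{\mathrm{reg}})}=M_\mathcal F\xrightarrow{\ \sim\ }\overline{C(M_{\mathrm{reg}})}$ over $M$, matching $p$ with $\mathfrak p$. Passing to fibers over $m$: $\mathfrak p^{-1}(m)=\{\,V^\circ : V\in p^{-1}(m)\,\}$, and then by item 2 of Remark \ref{rmk:cone}, $\mathfrak P^{-1}(m)=\bigcup_{V\in p^{-1}(m)}V^\circ$ as a subset of $A_m^*$. It remains to rewrite $V^\circ\subseteq A_m^*$ as the annihilator, inside $A_L^*|_m$, of the class $\overline V\subseteq\mathfrak g_m(\mathcal F)$. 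Since $\mathrm{Sker}\,\rho_m\subseteq V$, the annihilator $V^\circ$ lies in $(\mathrm{Sker}\,\rho_m)^\circ=A_L^*|_m$, and under the canonical identification $A_L^*|_m\cong(\mathrm{Sker}\,\rho_m)^\circ$ the subspace $V^\circ$ corresponds to the annihilator of $V/\mathrm{Sker}\,\rho_m=\overline V$ inside $\mathfrak g_m(\mathcal F)=\ker\rho_m/\mathrm{Sker}\,\rho_m\subseteq A_L|_m$; the "limit subalgebra" description is then just the definition of the set of limit subalgebras together with feature (2) of the Nash blow-up (that each $\overline V$ is indeed a Lie subalgebra of $\mathfrak g_m(\mathcal F)$). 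This gives $\mathrm{NH}(\mathcal F)\simeq\bigcup_{V\in M_\mathcal F}\overline V^{\,\circ}$, and the final "in particular" clause is immediate since the right-hand side is phrased purely in terms of $\mathfrak g_m(\mathcal F)$ and its limit subalgebras, which by the discussion after \eqref{eq:holonomyalg} are intrinsic to $\mathcal F$.

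The main obstacle I anticipate is the continuity/closure-commutation step: one must be careful that the duality map on Grassmann bundles is not merely a fiberwise bijection but a genuine homeomorphism of total spaces, so that $\overline{C(M_{\mathrm{reg}})}$ is literally the image of $M_\mathcal F$ and not something larger (a priori, closures of homeomorphic images in homeomorphic ambient spaces do agree, so once the homeomorphism $\mathrm{Grass}_{-r}(A)\cong\mathrm{Grass}_r(A^*)$ is in hand this is automatic, but it deserves an explicit sentence). A secondary technical point, in the non-smooth case flagged in Remark \ref{rmk:limitesGrassmann}(2), is that $M_\mathcal F$ may be singular; however all the statements here are about the underlying topological space and the fibers of $p$, so smoothness is not actually needed — I would remark that the argument is purely set-theoretic/topological at the level of the Grassmann bundles and their closed subsets. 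Everything else (the chain $\mathrm{Sker}\,\rho_m\subseteq V\subseteq\ker\rho_m$, the rank count at regular points, the intrinsic nature of \eqref{eq:holonomyalg}) has already been recorded in the excerpt and is used as a black box.
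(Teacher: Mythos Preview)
Your proposal is correct and follows essentially the same route as the paper: both identify $A_L^*|_m$ with $(\mathrm{Sker}\,\rho_m)^\circ$, use the canonical duality $V\mapsto V^\circ$ as a homeomorphism $\mathrm{Grass}_{-r}(A)\xrightarrow{\sim}\mathrm{Grass}_r(A^*)$ intertwining the sections $\sigma$ and $C$, and conclude that $\mathfrak P^{-1}(m)=\bigcup_{V\in p^{-1}(m)}V^\circ$, then read this intrinsically via $\overline V\subset\mathfrak g_m(\mathcal F)$. Your treatment is in fact slightly more explicit than the paper's about the closure-commutation step and the linear-algebra identification $V^\circ=(\overline V)^\circ$ inside $A_L^*|_m$, but there is no substantive difference in strategy.
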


\begin{proof}
 Since the fibers of the holonomy algebroid $A_L$ at $m\in L$ is $\frac{A_m}{\mathrm{Sker}\rho_m}$, therefore \[A^*_L|_m\simeq \mathrm{Sker}({\rho|_L})_m^\circ=\mathrm{ker}\rho_m^\circ\subset A^*_m.\]
Since $\mathrm{HN}(\mathcal{F})$ is the closure of the image of the section \[C \colon M_{\mathrm{reg}}\to \mathrm{Grass}_{r}(A^*), m\mapsto \ker\rho_m^\circ,\] and that \begin{equation}\label{eq:duality-grass}
    \displaystyle \mathrm{Grass}_{-r}(A)\leftrightarrow \mathrm{Grass}_r(A^{*}),\; V\mapsto V^\circ
\end{equation}
is a canonical isomorphism, therefore, the Helffer-Nourrigat cone is exactly \[\mathrm{HN}(\mathcal{F})=\cup_{V\in M_\mathcal{F}} V^\circ.\]
Since every element $V\in M_\mathcal{F}$ is canonically included in $\mathfrak g_m(\mathcal{F})=\frac{\ker \rho_m}{\mathrm{Sker}\rho_m}$ (which only depends on $\mathcal{F}$)  (see \cite{louis2023series}), thus, the inclusion $V^\circ \subset A^*_L|_m$ is canonical.  
In addition, the fiber $\mathfrak P^{-1}(m)$ of the Helffer-Nourrigat cone over a point $m$ is the union of all annihilators $V^\circ$ of the points $V\in p^{-1}(m)$ of the fiber of the Nash resolution $p\colon M_\mathcal{F}\to M$ over $m$. This proves Item 1 and 2.
\end{proof}

\begin{remark}
Notice that through the isomorphism in Equation \eqref{eq:duality-grass} we have $M_\mathcal{F}\simeq \mathrm{HN}(\mathcal{F})$.
\end{remark}

Now, here is the main statement of this section.
\begin{proposition}Let $\mathfrak P\colon {\mathrm{HN}}(\mathcal F) \to M $ be the Helffer-Nourrigat cone of a singular foliation associated with an anchored bundle $\rho\colon A\to TM$.
\begin{enumerate}
\item If $(A,\rho)$ admits a Lie algebroid structure $[\,\cdot,\cdot]_A\colon \Gamma(A)\times \Gamma(A)\to \Gamma (A)$ with anchor map $\rho$, then the Helffer-Nourrigat cone ${\mathrm{HN}}(\mathcal{F})$ is a union of symplectic leaves of the linear Poisson structure on $A^*$ associated with the Lie algebroid bracket.
\item For every leaf $L\in M/\mathcal{F}$ the restriction $\mathrm{HN}(\mathcal F)|_L$ of the Helffer-Nourrigat cone to $L$ is a union of symplectic leaves of the linear Poisson structure on the dual $A_L^*$ of the holonomy Lie algebroid  $A_L\to L$.
\end{enumerate}
\end{proposition}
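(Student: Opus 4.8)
The plan is to derive both items from one elementary fact about linear Poisson structures. Let $B\to N$ be a Lie algebroid, $q\colon B^{*}\to N$ the projection, and for $b\in\Gamma(B)$ write $\ell_{b}\in\mathcal{O}_{B^{*}}$ for the associated fibrewise-linear function. At every $\xi\in B^{*}$ the covectors $\{d\ell_{b}|_{\xi}\}_{b\in\Gamma(B)}$ together with $\{d(q^{*}f)|_{\xi}\}_{f\in\mathcal{O}_{N}}$ span $T^{*}_{\xi}(B^{*})$ (the first family surjects onto the vertical conormal directions, the second spans the horizontal ones), so the characteristic distribution of $B^{*}$ — the image of the Poisson bivector, whose maximal integral manifolds are the symplectic leaves — is spanned at every point by the Hamiltonian vector fields $X_{\ell_{b}}$ and $X_{q^{*}f}$. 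A short computation shows that this family of vector fields is closed under the Lie bracket; hence its orbits are exactly the integral manifolds of the characteristic distribution, i.e.\ the symplectic leaves. Consequently: \emph{any subset of $B^{*}$ invariant under all the flows $\Phi^{\ell_{b}}_{t}$ and $\Phi^{q^{*}f}_{t}$ is a union of symplectic leaves}. This is the form we need, since $\mathrm{NH}(\mathcal{F})$ is in general not a submanifold and one cannot argue through tangency of the bivector.

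For item (1) apply this with $B=A$, $N=M$. Three standard facts are used. (i) The dual anchor $\rho^{*}\colon T^{*}M\to A^{*}$ is Poisson for the canonical symplectic structure on $T^{*}M$, since $\rho$ is a Lie algebroid morphism, so that $(\rho^{*})^{*}\ell_{a}=\ell_{\rho(a)}$ and $(\rho^{*})^{*}(q^{*}f)$ is $f$ pulled back to $T^{*}M$. (ii) The flow of $X_{q^{*}f}$ fixes $M$ pointwise and acts on each fibre $A^{*}_{m}$ by the translation $\xi\mapsto\xi-t\,\rho_{m}^{*}(df_{m})$; since $\rho_{m}^{*}(df_{m})\in\operatorname{Im}\rho_{m}^{*}=(\ker\rho_{m})^{\circ}$, this preserves $(\ker\rho_{m})^{\circ}$. (iii) The flow $\Phi^{\ell_{a}}_{t}$ is the fibrewise dual of the Lie-algebroid flow $\Phi^{a}_{t}$ of $a$ on $A$; since $\Phi^{a}_{t}$ covers the flow $\psi_{t}$ of $\rho(a)$ on $M$ and $\rho\circ\Phi^{a}_{t}=T\psi_{t}\circ\rho$, over $M_{\mathrm{reg}}$ — where $\ker\rho$ has constant rank — $\Phi^{a}_{t}$ restricts to an isomorphism $\ker\rho_{m}\xrightarrow{\sim}\ker\rho_{\psi_{t}(m)}$, so $\Phi^{\ell_{a}}_{t}$ carries $(\ker\rho_{m})^{\circ}$ onto $(\ker\rho_{\psi_{t}(m)})^{\circ}$. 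Now write $\mathrm{NH}(\mathcal{F})=\overline{\bigcup_{m\in M_{\mathrm{reg}}}(\ker\rho_{m})^{\circ}}$. As $\rho(a)\in\mathcal{F}$, its flow $\psi_{t}$ preserves the saturated open set $M_{\mathrm{reg}}$, so by (ii) and (iii) each of the flows $\Phi^{q^{*}f}_{t}$ and $\Phi^{\ell_{a}}_{t}$ maps $\bigcup_{m\in M_{\mathrm{reg}}}(\ker\rho_{m})^{\circ}$ onto itself, and being a diffeomorphism it therefore preserves its closure $\mathrm{NH}(\mathcal{F})$. By the fact above, $\mathrm{NH}(\mathcal{F})$ is a union of symplectic leaves of $A^{*}$; this is item (1).

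For item (2) run the same argument with $B=A_{L}$, $N=L$, using the transitive holonomy Lie algebroid $\rho_{L}\colon A_{L}\to TL$, the Poisson map $\rho_{L}^{*}\colon T^{*}L\to A_{L}^{*}$, and the flows $\Phi^{\ell_{a}}_{t},\Phi^{q_{L}^{*}f}_{t}$ on $A_{L}^{*}$. The only new ingredient is that, $L$ possibly being a singular leaf, $\mathrm{NH}(\mathcal{F})|_{L}$ is no longer the closure of conormals over a dense regular part; instead, by Proposition~\ref{prop:justification},
\[
\mathrm{NH}(\mathcal{F})|_{L}=\bigcup_{m\in L}\ \bigcup_{V\in p^{-1}(m)}\overline{V}^{\circ}\ \subseteq\ A_{L}^{*},\qquad \overline{V}\subseteq\mathfrak{g}_{m}(\mathcal{F})\subseteq A_{L}|_{m},
\]
the inner union running over the limit subalgebras at $m$ (and this set is closed in $A_{L}^{*}$, the fibres of $p$ being compact). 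The flow $\Phi^{q_{L}^{*}f}_{t}$ translates the fibres of $A_{L}^{*}$ by elements of $\operatorname{Im}\rho_{L}^{*}=(\ker\rho_{L})^{\circ}=\mathfrak{g}_{L}(\mathcal{F})^{\circ}$, and $\overline{V}\subseteq\mathfrak{g}_{m}(\mathcal{F})$ gives $\mathfrak{g}_{m}(\mathcal{F})^{\circ}\subseteq\overline{V}^{\circ}$, so each $\overline{V}^{\circ}$ — hence $\mathrm{NH}(\mathcal{F})|_{L}$ — is preserved. The flow $\Phi^{\ell_{a}}_{t}$ is the fibrewise dual of the Lie-algebroid flow of $a$ on $A_{L}$; it covers the flow $\psi_{t}\colon L\to L$ of $\rho_{L}(a)$ and preserves the isotropy bundle $\mathfrak{g}_{L}(\mathcal{F})=\ker\rho_{L}$. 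Because the Nash blow-up $M_{\mathcal{F}}$, the strong kernel and the isotropy Lie algebras — hence the whole family of limit subalgebras — are canonically attached to $\mathcal{F}$, they transform naturally under the foliated flow $\psi_{t}$ (concretely, $\rho(a)$ lifts along $p$ to $M_{\mathcal{F}}$ and $\psi_{t}$ carries $p^{-1}(m)$ onto $p^{-1}(\psi_{t}m)$), so $\Phi^{\ell_{a}}_{t}$ permutes the subspaces $\overline{V}^{\circ}$ among themselves and preserves $\mathrm{NH}(\mathcal{F})|_{L}$. The fact above then yields item (2).

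I expect the reduction in the first paragraph to be routine. The genuine work lies in item (2), where the point needing care is that the dual Lie-algebroid flow on $A_{L}^{*}$ preserves the \emph{singular} set $\mathrm{NH}(\mathcal{F})|_{L}$ — a statement not internal to $A_{L}$, but resting on the naturality of Mohsen's blow-up, i.e.\ the functoriality of the family of limit subalgebras under $\mathcal{F}$-preserving diffeomorphisms of $M$; this is precisely the price of not having a global Lie algebroid over $M$. One should also resist trying to show each individual $\overline{V}^{\circ}$ is leaf-saturated: already for the linear $\mathfrak{sl}_{2}$-action on $\mathbb{R}^{2}$, each $\overline{V}^{\circ}$ over the origin is a plane tangent to the nilpotent cone of $\mathfrak{sl}_{2}^{*}$, which is not a union of coadjoint orbits, whereas their union — the closed exterior of that cone — is.
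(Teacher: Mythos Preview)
Your argument is correct and, for item (1), is essentially the paper's: show that $\mathrm{NH}(\mathcal F)$ is preserved by the Hamiltonian flows that generate the symplectic foliation of $A^*$, using that $\rho^*$ intertwines these flows with the cotangent lifts of the flows of $\mathcal F$ on $M$ (this is the commutative diagram the paper writes down). You are actually more careful than the paper on one point: you also verify invariance under the vertical flows $\Phi^{q^*f}_t$, not just under the linear ones $\Phi^{\ell_a}_t$. The paper checks only the latter; since linear vector fields are tangent to the zero section, the $\mathcal H_a$ alone do not span the characteristic distribution there, so your extra check is not superfluous and in fact completes the argument.

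For item (2) the two proofs genuinely diverge. The paper disposes of it in one line: it chooses the ambient anchored bundle so that $A_L = A$ along $L$ (``extend $A_L$ to an anchored bundle over $\mathcal F$'') and then reruns the computation of item (1) verbatim, so the closure over $M_{\mathrm{reg}}$ that defines $\mathrm{NH}(\mathcal F)$ is still available. You instead work intrinsically on $A_L^*$, using the limit-subalgebra description $\mathrm{NH}(\mathcal F)|_L = \bigcup_{V}\overline V^{\circ}$ from Proposition~\ref{prop:justification}, and then check invariance of this set directly: the vertical flows translate by $\mathfrak g_L(\mathcal F)^\circ \subseteq \overline V^\circ$, and the linear ones permute the $\overline V$'s because the flow of any $X\in\mathcal F$ lifts to $M_{\mathcal F}$ via $p^!X$ and hence carries $p^{-1}(m)$ to $p^{-1}(\psi_t m)$. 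The paper's shortcut is slicker but hides the mechanism; your route makes explicit that what is really being used is the naturality of the Nash blow-up (equivalently, of the family of limit subalgebras) under diffeomorphisms preserving $\mathcal F$. One small point to tighten in your version: the lift $p^!X$ requires $X\in\mathcal F$ on an open set of $M$, so for $a\in\Gamma(A_L)$ you should say that you first extend $a$ locally to a section of the ambient anchored bundle before invoking this lift --- which is exactly the step the paper's ``extend $A_L$'' absorbs.
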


\begin{proof}
Assume that $A$ is a Lie algebroid. Recall that the canonical linear Poisson structure on $\mathrm{pr}\colon A^*\to M$ is given as follows \cite{Mackenzie, CFM}: To every section $a\in \Gamma(A)$ is associated a fiberwise linear function on the total space of the dual vector bundle $\mathrm{pr}\colon A^*\to M$ given by the evaluation map \(\mathrm{ev}_a\colon A^*\to \mathbb R, \alpha\mapsto \langle \alpha, a\circ \mathrm{pr}\rangle\). The fiberwise linear Poisson bracket \(\{\cdot\,,\cdot\}_{A^*}\) on $A^*$ is uniquely determined by the relation \begin{equation}\label{eq:Linear_Poisson}
    \{\mathrm{ev}_a, \mathrm{ev}_b\}_{A^*}=\mathrm{ev}_{[a,b]_A}, \; a, b\in \Gamma(A).
\end{equation} The relation \eqref{eq:Linear_Poisson} imposes that \[\{\mathrm{ev}_a, \mathrm{pr}^*f\}_{A^*}=\mathrm{pr}^*(\rho(a)[f])\;\text{and}\;
\;\{\mathrm{pr}^*f, \mathrm{pr}^*g\}_{A^*}=0,\]for all $a\in \Gamma(A)$ and $f, g\in \mathcal{O}_M$.

For every $a\in \Gamma(A)$, the Hamiltonian vector field $\mathcal{H}_a\in \mathfrak X(A^*)$ associated with the evaluation function $\mathrm{ev}_a\colon A^*\to \mathbb R$  is a linear vector field on $A^*$ which projects to $\rho(a)$, that is, the following diagram is a vector bundle morphism $$\xymatrix{A^*\ar[r]^{\mathcal{H}_a} \ar[d]_{\mathrm{pr}}&TA^*\ar[d]^{T\mathrm{pr}}\\M\ar[r]^{\rho(a)}&TM}$$Hence, the flow $\phi^{\mathcal{H}_a}_t\colon A^*\to A^*$ of the Hamiltonian vector field $\mathcal{H}_a$ is a isomorphism of vector bundles.
Now, the proof of Item 1 is done in two steps.
\begin{enumerate}
    \item [(a)] For $a\in \Gamma(A)$ with $X=\rho(a)$, the flow $\phi^{\mathcal{H}_a}_t\colon A^*\to A^*$ of the Hamiltonian vector field $\mathcal{H}_a$ on $A^*$ exists whenever the flow $\phi^X_t$ exists and the following diagram commutes \begin{equation}\label{eq:diag1}
\xymatrix{T^*M\ar[r]^{(T\phi_t^X)^*}\ar[d]_{\rho^*}&T^*M\ar[d]^{\rho^*}\\ A^*\ar[d]\ar[r]^{\phi_t^{\mathcal{H}_a}}&A^*\ar[d]^{}\\M\ar[r]^{\phi_t^X} &M.}
\end{equation}
 To see that, let $\mathcal{H}_X$ denotes the Hamiltonian vector field associated with $X\in \mathfrak X(M)$ w.r.t the canonical Poisson structure of $T^*M$. The vector field $(\mathcal{H}_X+\mathcal{H}_a, X)$ is a linear vector field on $T^*M\oplus A^*$ that is tangent to the graph of $\rho^*\colon T^*M\to A^*$$$ \left\{ (\xi, \rho^*(\xi) ) \,|\,  \xi \in T^*M \right\} \subset T^*M \oplus A^*. $$  Hence, the diagram \eqref{eq:diag1} commutes. The complete argument of the proof is very similar to \cite[Proposition 1.8]{louis2024nash}.  

\item [(b)] For every $m\in M$, the flow $\phi_t^{\mathcal{H}_a}$ (whenever it makes sense) sends $\mathrm{Im}{\rho^*_m}$ to  $\mathrm{Im}\rho^*_{\phi^X_t(m)}$, that is
$$\phi_t^{\mathcal{H}_a}|_{m}\left(\mathrm{Im}{\rho^*_m}\right)=\mathrm{Im}\rho^*_{\phi^X_t(m)}$$in view of Diagram \eqref{eq:diag1}. 
Since $\phi^X_t(M_{\mathrm{reg}})\subseteq M_{\mathrm{reg}}$, this implies that $  \phi^{\mathcal{H}_a}_t$ preserves the closure $\overline{ \cup_{m\in M_{\mathrm{reg}}} \mathrm{Im}\rho_m^*}\subset A^*$. 
Therefore, for every $a\in \Gamma(A)$, the flow $\phi_t^{\mathcal{H}_a}$ of $\mathcal{H}_a$, whenever it is defined, preserves the Helffer-Nourrigat cone $\mathrm{HN}(\mathcal{F})$, i.e.,  \begin{equation}
    \xymatrix{\mathrm{HN}(\mathcal{F})\ar[r]^{\phi^{\mathcal{H}_a}_t}\ar[d]_{\mathfrak P}&\mathrm{HN}(\mathcal{F})\ar[d]^{\mathfrak P}\\M\ar[r]^{\phi_t^X}&M}
\end{equation}
This means that any symplectic leaf of the canonical Poisson structure on $A^*$ 
that contains a point of $\mathrm{HN}(\mathcal{F})$ is entirely included in $\mathrm{HN}(\mathcal{F})$.  Item 2 is proved in the same manner as Item 1, since we may assume that $A_L=A$,
by extending $A_L$ to an anchored bundle over $\mathcal{F}$. This completes the proof.
\end{enumerate}
 \end{proof}

\subsection{Helffer-Nourrigat cone and the Nash algebroid }
In the following proposition, we relate the Helffer-Nourrigat cone with the Nash algebroid. 

\begin{proposition}\label{prop:NHandNash}
 Let $\mathcal{F}$ be a singular foliation and $\rho\colon A\to TM$ be an anchored bundle over $\mathcal F$. Let $(M_\mathcal{F}, \mathfrak D_\mathcal{F})$ be the Nash algebroid associated with $\mathcal
 F$.   Then, there exists a fiberwise injective\footnote{{But of course not injective.}} vector bundle morphism:
 $$  \xymatrix{\mathfrak j\colon \mathfrak D_\mathcal{F}^* \ar[d] \ar[r] & \ar[d] A^*\\
M_\mathcal{F} \ar[r]^{p}&M}.$$
The image $\mathfrak j(\mathfrak D_\mathcal{F}^*)\subset A^*$ coincides with the Helffer-Nourrigat cone ${\mathrm{HN}}(\mathcal F)$.
Moreover, \begin{enumerate}
    \item for every leaf $L\in M/ \mathcal F$, the image of $ \mathfrak j\colon \mathfrak D_\mathcal{F}^*\to A^*$ takes values in the annihilator of the strong-kernel of $\rho\colon A\to TM$,
which can be identified with $ A_L^*$.
\item the vector bundle morphism $\mathfrak j$,  induces a fiberwise injective vector bundle morphism\footnote{Here, we consider $\coprod_{L \in {\mathrm{Leaves}}} A_L^*$ as a “singular” vector bundle over $ M$: the ranks of the fibers vary.}
 $$  \xymatrix{\mathfrak J \colon {\mathfrak D_\mathcal{F}^*} \ar[d] \ar[r] & \ar[d] \coprod_{L \in M/\mathcal{F}} A_L^*\\
M_\mathcal F \ar[r]^{p}&M}$$
whose image is identified with the Helffer-Nourrigat cone  $ {\mathrm{HN}}(\mathcal F)$.
\end{enumerate}
\end{proposition}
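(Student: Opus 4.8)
The plan is to build the map $\mathfrak j$ directly from the exact sequence \eqref{eq:exactsequence} and then identify its image using the description of $\mathrm{HN}(\mathcal F)$ already obtained in Proposition \ref{prop:justification}. First, dualizing the short exact sequence of vector bundles $0\to T\to p^*A\to \mathfrak D_\mathcal{F}\to 0$ over $M_\mathcal{F}$ yields an exact sequence $0\to \mathfrak D_\mathcal{F}^*\to p^*A^*\to T^*\to 0$; in particular there is a canonical fiberwise injective bundle morphism $\mathfrak D_\mathcal{F}^*\hookrightarrow p^*A^*$ over $M_\mathcal{F}$. Composing with the tautological map $p^*A^*\to A^*$ covering $p\colon M_\mathcal{F}\to M$ produces the desired $\mathfrak j\colon \mathfrak D_\mathcal{F}^*\to A^*$, fiberwise injective and covering $p$. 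Concretely, at a point $V\in M_\mathcal{F}$ lying over $m\in M$, the fiber of $T$ is (the relevant completion of) $V\subseteq \ker\rho_m\subseteq A_m$, so the image of $\mathfrak j$ at $V$ is exactly the annihilator $V^\circ\subseteq A_m^*$, an $r$-dimensional subspace.

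Next I would identify $\mathfrak j(\mathfrak D_\mathcal{F}^*)$ with $\mathrm{HN}(\mathcal F)$. By the previous step, $\mathfrak j(\mathfrak D_\mathcal{F}^*)=\bigcup_{V\in M_\mathcal{F}} V^\circ\subseteq A^*$. But this is precisely the formula $\mathrm{NH}(\mathcal F)=\bigcup_{V\in M_\mathcal{F}} V^\circ$ established in the proof of Proposition \ref{prop:justification} via the Grassmann duality \eqref{eq:duality-grass}. Hence $\mathfrak j(\mathfrak D_\mathcal{F}^*)=\mathrm{HN}(\mathcal F)$ as subsets of $A^*$, proving the first assertion. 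For Item 1, since by the first listed feature of the Nash blow-up we have $\mathrm{Sker}\,\rho_m\subseteq V\subseteq \ker\rho_m$ for every $V\in p^{-1}(m)$, taking annihilators gives $\ker\rho_m^\circ\subseteq V^\circ\subseteq \mathrm{Sker}\,\rho_m^\circ$; thus the image of $\mathfrak j$ over any $m\in L$ lands in $\mathrm{Sker}\,\rho_m^\circ$, which — as recalled in the proof of Proposition \ref{prop:justification} — is canonically $A_L^*|_m$.

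For Item 2, I would simply observe that the factorization $\mathfrak D_\mathcal{F}^*\to \mathrm{Sker}\,\rho^\circ \cong \coprod_{L\in M/\mathcal F} A_L^*$ obtained in Item 1 is the asserted morphism $\mathfrak J$: it is fiberwise injective because $\mathfrak j$ is, and it covers $p$ by construction. Its image is $\mathrm{HN}(\mathcal F)$ viewed leafwise, which is exactly the leafwise description $\mathrm{NH}(\mathcal F)\simeq\bigcup_{V\in M_\mathcal{F}}\overline V^\circ$ from Proposition \ref{prop:justification}(2), using that $V^\circ\subseteq \mathrm{Sker}\,\rho_m^\circ = A_L^*|_m$ corresponds to $\overline V^\circ$ for $\overline V$ the image of $V$ in $\mathfrak g_m(\mathcal F)$. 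The main obstacle I anticipate is purely bookkeeping rather than conceptual: one must be careful that the fiber of $T$ in \eqref{eq:exactsequence} really is the subspace $V\subseteq A_m$ indexing the point of $M_\mathcal{F}$ (this is how the Nash algebroid is constructed in \cite{louis2024nash}, where $\mathfrak D_\mathcal{F}=p^*A/T$ with $T|_V = V$), and that the identification $\mathrm{Sker}\,\rho_m^\circ\cong A_L^*|_m$ used here is the same one as in Proposition \ref{prop:justification}; once these compatibilities are pinned down, everything reduces to the already-proven set-level equality $\mathrm{HN}(\mathcal F)=\bigcup_{V\in M_\mathcal{F}} V^\circ$.
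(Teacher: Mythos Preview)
Your proposal is correct and follows essentially the same route as the paper: both construct $\mathfrak j$ by identifying $\mathfrak D_\mathcal{F}^* = (p^*A/T)^*$ with the annihilator $T^\circ \subset p^*A^*$ (the paper phrases this as a general construction for any quotient $p^*E/T$, while you phrase it as dualizing the exact sequence \eqref{eq:exactsequence}), and then deduce Items~1 and~2 from $\mathrm{Sker}\,\rho_m \subseteq V$ and the identification $A_L^*|_m \simeq \mathrm{Sker}\,\rho_m^\circ$. The only cosmetic difference is in identifying the image: the paper argues by density and properness (over $M_{\mathrm{reg}}$ the image is $\rho^*(T^*M)$, and the full image is its closure), whereas you invoke the formula $\mathrm{NH}(\mathcal F)=\bigcup_{V\in M_\mathcal{F}} V^\circ$ already obtained in Proposition~\ref{prop:justification}; both are valid and amount to the same thing.
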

\begin{proof}
We start by recalling a general construction. Given a vector bundle $ E \to M$ and a map $p\colon N \to M$, consider the pull-back bundle $p^*E \to N$. Now, let $T \subset  p^*E $ be a subvector bundle over $N$ then there is a natural fiberwise injective vector morphism 
 $$ \xymatrix{\left(\frac{p^*E}{T}\right)^*  \ar[d] \ar^{\mathfrak j}[r] & E^*\ar[d]\\ N\ar[r]&M}.$$
It consists in identifying for every point $n \in N$ the dual of the quotient space $\left(\frac{ p^*E_n}{T_n}\right)^*$ with the annihilator $ T^\circ_n \subset E^*_{p(n)}$ of $ T_n$, and to inject the later in $ E^*$.
Moreover, the image of $\mathfrak j $ can be described as follows. For every $m \in M$,  its intersection with $ E_m^*$ is a union of vector spaces of dimension $ {\mathrm{rk}}(E)- {\mathrm{rk}}(T)$. We therefore call \emph{image cone} this image.

We apply this construction to
\begin{enumerate}
\item $E:=A $, with $\rho\colon A\to TM$ an anchored bundle over $ \mathcal F$,
\item $ N:=M_\mathcal{F}$ the Nash blowup\footnote{Of course, it may not be a manifold, but this defect  has no practical consequence here.} of \S \ref{sec:Nash}, computed with respect to $\rho\colon A\to TM$, 
\item $T$ the tautological bundle on the Grassmannian  $ {\mathrm{Grass}}_{-r}(A)$, restricted to $M_\mathcal{F}$.
\end{enumerate}

In this case, the quotient $p^* E / T $ is the Nash algebroid $\mathfrak D_\mathcal{F}$\footnote{That we recall to be the Lie algebroid of Mohsen's groupoid \cite{mohsen2021blow}.} associated to the Nash blow up of $ \mathcal F$.
We therefore obtain a fiberwise injective vector bundle morphism:
 $$  \xymatrix{\mathfrak j\colon \mathfrak D_\mathcal{F}^* \ar[d] \ar[r] & \ar[d] A^*\\
M_\mathcal{F} \ar[r]^{p}&M}.$$

Since $p\colon M_\mathcal{F}\to M$ is proper and the image of $ \mathfrak j$ over $ \pi^{-1}(M_{\mathrm{reg}})$ (i.e., the regular part, which is dense) coincides with the image of $ \rho^*$ (i.e., the annihilator of the kernel of $ \rho$), then the image cone $\mathfrak j(\mathfrak D_\mathcal{F}^*)$ in this case coincides with the Helffer-Nourrigat cone. Moreover, for every leaf $L$ of $ \mathcal F$, the image of $ \mathfrak j$ takes values in the annihilator of the strong-kernel of $\rho\colon A\to TM$,
which can be identified with $ A_L^*$ (the dual of the holonomy Lie algebroid).
The previous vector bundle morphism, therefore,  induces a vector bundle morphism  (which is still fiberwise injective)
 $$  \xymatrix{\mathfrak J \colon {\mathfrak D_\mathcal{F}^*} \ar[d] \ar[r] & \ar[d] \coprod_{L \in M/\mathcal{F}} A_L^*\\
M_\mathcal F \ar[r]^{p}&M}$$
whose image, is again, the Helffer-Nourrigat  cone  $ {\mathrm{HN}}(\mathcal F)$.
\end{proof}

\begin{example}
Let $\mathcal{F}$ be a Debord singular foliation, i.e., there exists a vector bundle $A\longrightarrow M$ such that $\Gamma(A)\simeq \mathcal{F}$. This isomorphism is given by a vector bundle morphism, $A\stackrel{\rho}{\rightarrow}TM$ which is injective on the open dense subset $M_{\mathrm{reg}}$. Here $M_\mathcal{F}\simeq M$ and the Helffer-Nourrigat cone is $\cup_{m\in M}A^*_m=A^*$.\end{example}

\begin{example}
Let $\mathcal F$ be the singular foliation on $\mathbb R^3$ generated by the image of the cotangent Lie algebroid $\pi^{\#}\colon T^*\mathbb R^3\to T\mathbb R^3$,  $\pi^\sharp(dx)= z\frac{\partial}{\partial y}-y\frac{\partial}{\partial z}$,\, $\pi^\sharp(dy)= x\frac{\partial}{\partial z}-z\frac{\partial}{\partial x}$,\, $\pi^\sharp(dz)= y\frac{\partial}{\partial x}-x\frac{\partial}{\partial y}$. Here, $M_\mathrm{reg}$ is $\mathbb R^{3}\setminus \{0\}$. The Nash blowup space  $ M_\mathcal{F}$ is the usual blowup $\mathrm{Bl}_0(\mathbb R^3)$ of $\mathbb R^3$ at the origin. The Helffer-Nourrigat cone  $\mathfrak P\colon{\mathrm{HN}}(\mathcal F) \to \mathbb R^3$ is

\[\begin{cases}
    \mathfrak P^{-1}(v)=\langle v\rangle^\perp\simeq T_{\frac{v}{||v||}}^*S^2, & v\in \mathbb R^{3}\setminus\{0\}\\\mathfrak{P}^{-1}(0)= \bigcup_{\ell\in \mathbb P^2}\ell^\perp= \mathbb R^3.
\end{cases}\]
\end{example}

I acknowledge discussions with Camille Laurent-Gengoux; Fani Petalidou; Mohsen Masmoudi and Cédric Rigaud for the following example.

\begin{example}
    Let $\mathcal{F}$ be the singular foliation on $ M=\mathbb R^d$ made of all vector fields vanishing at the origin.  The singular foliation $\mathcal{F}$ is given by the action Lie algebroid $A=\mathfrak {gl}_d(\mathbb R)\ltimes \mathbb R^d\to \mathbb R^d$. The Nash blow-up $p\colon M_\mathcal{F}\to M$ coincides to the usual blow-up of $\mathbb R^d$  at the origin. The Helffer-Nourrigat cone of $\mathcal{F}$ is given as follows:
\begin{enumerate}
    \item if $m$ is not the origin, the fiber of the Helffer-Nourrigat cone over $m$ is isomorphic to $\mathbb R^d $, 
    \item if $m$ is the origin, the fiber is made of square $d \times d $ matrices of rank $\leq 1$.
\end{enumerate}
\end{example}

\begin{example}
Consider the singular foliation $\mathcal{F}\subset \mathfrak X(\mathbb R^2)$ generated by the vector fields \[x^2\frac{\partial }{\partial x},\, y^2\frac{\partial }{\partial x},\, xy\frac{\partial }{\partial x},\, x^2\frac{\partial }{\partial y},\, y^2\frac{\partial }{\partial y},\, xy\frac{\partial }{\partial y}.\]
$\mathcal{F}$ is the image of the anchored bundle $A=\mathbb R^2\times \mathbb R^{6}$. The Helffer-Nourrigat cone  $\mathfrak P\colon{\mathrm{HN}}(\mathcal F) \to \mathbb R^2$ is equal to \begin{enumerate}
    \item $T^*_{(x,y)}\mathbb R^2$ at $(x,y)\in \mathbb R^2\setminus \{0\}$.
    \item  a line in the cone $uv=w^2$ and two points on that line, at the origin.
\end{enumerate}
\end{example}

\section{Longitudinal differential operators}\label{sec:LDO}
\subsection{The  universal enveloping
algebra of a Lie-Rinehart algebra}
Let us first recall the definition of a Lie-Rinehart algebra on a commutative associative unital algebra $\mathcal{O}$.
\begin{definition}\cite{MR2075590,MR1625610, CamilleLouis}
A \emph{Lie-Rinehart algebra} over $\mathcal O$ is a triple $(\mathcal A , [\cdot, \cdot]_\mathcal A , \rho_{\mathcal A})$ with $ \mathcal A$ an $ \mathcal O$-module, $[\cdot, \cdot]_\mathcal A  $ a Lie algebra bracket on  $\mathcal A $, and $ \rho_\mathcal A \colon \mathcal A \longrightarrow  {\mathrm{Der}}(\mathcal O)$ an $ \mathcal O$-linear Lie algebra morphism called \emph{anchor map}, satisfying the so-called \emph{Leibniz identity}:
 $$   [  a,  f b ]_\mathcal A  = \rho_\mathcal A (a ) [f] \, b + f [a,b]_\mathcal A  \hbox{ for all $ a,b \in \mathcal A, f \in \mathcal O$}.$$\end{definition}

Let $(\mathcal{A}, [\cdot,\,\cdot]_\mathcal{A}, \rho_\mathcal{A})$ be a Lie-Rinehart algebra over $\mathcal{O}$. To define the universal enveloping algebra of $\mathcal{A}$ (see \cite{rinehart1963differential, zbMATH01287585, moerdijk2008universalenvelopingalgebralierinehart,zbMATH07453484} or \cite{maakestad2022envelopingalgebraliealgebra}), proceed as follows:
\begin{enumerate}
\item 
consider the universal enveloping algebra $U(\mathcal F)$ of the \emph{Lie} $\mathbb R$-algebra $\mathcal A $ to which we give a $\mathcal{O}$-module structure as \[f(a_1
\cdots\cdot \cdots a_k):=(fa_1)\cdots \cdot\cdots  a_k\]
for $a_1,\ldots, a_k\in \mathcal A $ and $f \in \mathcal O$.
\item divide $ U(\mathcal A)$ by the ideal $I$ generated  by: 
  \begin{equation}\label{eq:defideals}   a \cdot (f b) - (fa) \cdot b - \rho(a)[f] b \end{equation}
  where $a,b \in \mathcal A $ and $f \in \mathcal O$. 
  \end{enumerate}
  \begin{definition}
      The quotient $\mathcal U(\mathcal A):=U(\mathcal{A})/I$ is called the \emph{universal enveloping algebra of the Lie-Rinehart algebra  $\mathcal A $}.
  \end{definition}

  \begin{remark}
      The universal enveloping algebra $\mathcal U(\mathcal A)$ of a Lie-Rinehart algebra $\mathcal A $ is a filtered algebra $\mathcal U^{\leq \bullet}(\mathcal{A}) $, with $\mathcal U^{\leq k}(\mathcal{A})$ being the subspace generated by monomials of degree $ \leq k$.
  \end{remark}

 \begin{definition}
Let $(\mathcal{A}, [\cdot,\,\cdot]_\mathcal{A}, \rho_\mathcal{A})$ be a Lie-Rinehart algebra.\begin{enumerate}
    \item If $\mathcal{A}=\mathcal{F}\subseteq \mathfrak X(M)$ is a singular foliation on $M$, we say that $\mathcal U(\mathcal F)$ is the \emph{universal enveloping algebra of the singular
foliation $\mathcal{F}$}\item  If $(\mathcal{A}, [\cdot,\,\cdot]_\mathcal{A}, \rho_\mathcal{A})$ is a Lie algebroid over $M$ (i.e., $\mathcal{A}=\Gamma(A)$ for some vector bundle $A\to M$), we say that $\mathcal U( A)$ is the \emph{universal enveloping algebra of the Lie algebroid $A$} \cite{Moerdijk2010}.
\end{enumerate}
 \end{definition}

\subsection{Differential operators on a singular foliation}
\begin{definition}
    Let  $(M,\mathcal F) $ be a singular foliation, we call \emph{longitudinal differential operator} any linear combination of operators of the form
\begin{equation}\label{eq:monomials} 
\begin{array}{rcl} \mathcal C^\infty(M)&\to & \mathcal C^\infty(M)\\f &\mapsto &X_1 \circ \dots \circ X_k [f] \end{array}\end{equation}
with $ X_1, \dots, X_k \in \mathcal F$. 
We denote by $ {\mathrm{Diff}}(\mathcal F)$ the algebra of longitudinal differential operators.
\end{definition}


Now, consider the natural algebra morphism from  $U(\mathcal F) $ to $ {\mathrm{Diff}}(\mathcal F) $ defined by 
\begin{equation}\label{eq:realization} \begin{array}{rcl} U(\mathcal F) & \to & {\mathrm{Diff}}(\mathcal F)\\ X_1 \cdot \dots \cdot X_k &\mapsto& 
X_1 \circ \dots \circ X_k \end{array}.
 \end{equation}
 The latter goes to the quotient, on the universal enveloping algebra of $\mathcal F$ to define a surjective algebra morphism
\begin{align}\label{eq:realization10}
  \mathcal{U}(\mathcal{F})&\to\mathrm{Diff}(\mathcal{F})\\ \nonumber P&\mapsto \underline{P} 
\end{align}
that we call \emph{realization of $ \mathcal{U}(\mathcal F)$.} 
  
\begin{remark}The realization map $ \mathcal{U}(\mathcal F)\to \mathrm{Diff}(\mathcal{F})$ is not injective in general. For instance, the singular foliation on $\mathbb R^4$ generated by all vector fields vanishing at zero is a counter-example. Indeed, if $x_1,x_2, x_3$ and $x_4$ are the global coordinates of $\mathbb R^4$, then  $\mathcal{F}$ is generated by the vector fields $x_i\frac{\partial}{\partial x_j}$ for $i,j=1,\ldots,4$. For the non-zero element $P=x_1\frac{\partial}{\partial x_2}\cdot x_3\frac{\partial}{\partial x_4}-x_1\frac{\partial}{\partial x_4}\cdot x_3\frac{\partial}{\partial x_2}\in \mathcal{U}(\mathcal{F})$, we have $\underline{P}=x_1\frac{\partial}{\partial x_2 \partial x_4}-x_1\frac{\partial}{\partial x_4\partial x_2}=0$.
\end{remark}
However, 

\begin{proposition}
    If $ \mathcal F$ is a Debord singular foliation, (that is, $\mathcal{F}$ is a locally free $C^\infty(M)$-module) then The realization map $ \mathcal{U}(\mathcal F)\to \mathrm{Diff}(\mathcal{F})$ is a bijection. 
\end{proposition}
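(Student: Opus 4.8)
The plan is to show that the realization map $\mathcal{U}(\mathcal{F})\to\mathrm{Diff}(\mathcal{F})$ is injective, since surjectivity is already built into the definition. The key point is that for a Debord foliation, $\mathcal{F}\simeq\Gamma(A)$ for a Lie algebroid $A\to M$ whose anchor $\rho$ is injective on the dense open set $M_{\mathrm{reg}}$, so $\mathcal{U}(\mathcal{F})=\mathcal{U}(A)$ and the Poincar\'e--Birkhoff--Witt theorem for Lie-Rinehart algebras applies: $\mathcal{U}(A)$ carries a filtration whose associated graded is $\Gamma(S^\bullet(A))$ (see \cite{moerdijk2008universalenvelopingalgebralierinehart, zbMATH01287585}). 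So the plan is: given a nonzero $P\in\mathcal{U}^{\leq k}(A)$ with nonzero principal symbol $\sigma_k(P)\in\Gamma(S^k(A))$, I would show $\underline{P}\neq 0$ by looking at the symbol of the differential operator $\underline{P}$.

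First I would recall that the realization map is filtered, and that on $M_{\mathrm{reg}}$, where $F:=\rho(A)\subseteq TM|_{M_{\mathrm{reg}}}$ is a genuine regular foliation (a subbundle of $TM$), the foliation is locally a product and longitudinal differential operators are exactly the universal enveloping algebra of the regular foliation $F$. More concretely, over a point $m\in M_{\mathrm{reg}}$ the anchor $\rho_m\colon A_m\to T_mM$ is injective, so it induces an injection $S^k(\rho_m)\colon S^k(A_m)\hookrightarrow S^k(T_mM)$, and the principal symbol of $\underline{P}$ at $m$ (as an ordinary longitudinal differential operator on the regular foliation) is precisely $S^k(\rho_m)(\sigma_k(P)(m))$. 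Thus the commuting square relating the PBW symbol map for $\mathcal{U}(A)$ and the ordinary principal-symbol map for $\mathrm{Diff}(F)$ over $M_{\mathrm{reg}}$ tells me: if $\underline{P}=0$ then $S^k(\rho_m)(\sigma_k(P)(m))=0$ for all $m\in M_{\mathrm{reg}}$, hence $\sigma_k(P)(m)=0$ for all $m\in M_{\mathrm{reg}}$ by injectivity, hence $\sigma_k(P)\equiv 0$ on $M$ by density and continuity. By PBW this forces $P\in\mathcal{U}^{\leq k-1}(A)$, and induction on $k$ (the base case $k=0$ being that $\mathcal{O}_M$ injects into $\mathrm{Diff}(\mathcal{F})$ as multiplication operators, which is clear) completes the argument.

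The step I expect to be the main obstacle is making rigorous the identification, over $M_{\mathrm{reg}}$, of the principal symbol of the operator $\underline{P}$ with $S^\bullet(\rho)$ applied to the PBW symbol of $P$ — i.e.\ checking that the two natural gradings (the one coming from the Lie-Rinehart PBW filtration on $\mathcal{U}(A)$ and the classical order filtration on scalar differential operators) are intertwined by realization. This is essentially bookkeeping with the defining ideal \eqref{eq:defideals}: the relation $a\cdot(fb)-(fa)\cdot b=\rho(a)[f]b$ says precisely that the symbol is insensitive to the lower-order commutator terms, so that $\mathrm{gr}(\underline{P})$ depends only on $\mathrm{gr}(P)$, and one then evaluates on a local trivialization where $F=\langle\partial/\partial x_1,\dots,\partial/\partial x_r\rangle$ to see the map is $S^k(\rho_m)$ fiberwise. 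A clean alternative that avoids hand computation is to invoke the holonomy groupoid: a Debord foliation integrates to a Lie groupoid $\mathsf{G}\rightrightarrows M$ (as recalled in Example \ref{ex:SF}(3)), $\mathcal{U}(A)$ acts faithfully as left-invariant differential operators on $\mathsf{G}$, and this action factors through $\mathrm{Diff}(\mathcal{F})$ via restriction to source fibers, which near a regular point are diffeomorphic to plaques of $F$; faithfulness on the groupoid then descends. Either way, once the symbol compatibility is in hand, the density argument is immediate and the proposition follows.
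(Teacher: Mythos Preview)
The paper states this proposition without proof, so there is nothing to compare against directly. Your argument is correct and is the standard one: surjectivity is immediate from the definition of $\mathrm{Diff}(\mathcal F)$, while injectivity follows from the Poincar\'e--Birkhoff--Witt theorem for Lie--Rinehart algebras (valid here because a Debord foliation is projective as a $\mathcal C^\infty(M)$-module) combined with the injectivity of the anchor on the dense open set $M_{\mathrm{reg}}$ and a continuity/density argument on the principal symbol. The alternative route you sketch via faithful action of $\mathcal U(A)$ as left-invariant differential operators on an integrating groupoid is also valid and is closer in spirit to how this fact is often used in the literature on pseudodifferential calculus for foliations. Either approach is what the paper presumably leaves to the reader.
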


\begin{definition}
Let $\mathcal{F}$ be a singular foliation. Elements in  $ {\mathrm{Diff}}(\mathcal F) $ 
 of the form \eqref{eq:monomials} shall be called \emph{monomials of degree $k$}, and we say that a longitudinal differential operator is of \emph{degree $\leq k $} if it is a sum of monomials of degree $\leq k$.
\end{definition}

\begin{remark}
    The degree defines an increasing filtration on the algebra $ {\mathrm{Diff}}(\mathcal F) $, making it a filtered algebra $  \left(\mathrm{Diff}^{\leq k}(\mathcal F) \right)_{k \geq 0}$. Similarly, the algebra $\mathcal U(\mathcal F) $ comes equipped with a filtration $  \left(\mathcal U^{\leq k} (\mathcal F)\right)_{k \geq 0}$ defined in a same manner. The realization \eqref{eq:realization10}  is a morphism of filtered algebras $\mathcal{U}(\mathcal{F})\to\mathrm{Diff}(\mathcal{F})$.
\end{remark}

\subsection{What is the symbol of a longitudinal differential operator?}
Let $\mathcal{F}$ be a singular foliation. To start with, we shall define two ``symbols'':
\begin{enumerate}
\item the symbol of an element in the universal enveloping algebra $ \mathcal U(\mathcal F)$ of $ \mathcal F$, 
\item the symbol  of a longitudinal differential operator, i.e., an element in ${\mathrm{Diff}}(\mathcal F) $. 
\end{enumerate}

Before defining these two symbols,
let us very briefly recall the definition of the symbol in the context of Lie algebroids as in \cite{NistorPingAlan,zbMATH07453484}. Let $(A \to M, [\cdot\,,\cdot]_A, \rho)$ be a Lie algebroid over $M$. 
\begin{enumerate}
\item Upon choosing a Lie algebroid connection, there is a grading preserving coalgebra isomorphism $\mathcal U(A) \simeq \Gamma(S(A)) $ from the universal enveloping algebra of $A$ to the symmetric algebra of $B$. See, e.g., \cite{zbMATH07369649}.
\item Since this isomorphism preserves the grading, it induces a $\mathcal C^\infty(M) $-linear map\footnote{We insist that it is $ S^k(A)$, not $ S^{\leq k}(A)$: we project on the top component, i.e., the space generated by monomials of degree $k $.}   \[\mathcal U^{\leq k}(A)\to \Gamma(S^k (A)).\] 
\item This map does not depend on the choice of a Lie algebroid connection\footnote{Only the top component does not depend on the connection, the low components do. This independence means that we could work with local connections, so that what we say here extends to the real analytic or holomorphic settings.}.
\end{enumerate}

\begin{remark}
    For any vector bundle $A$ over $M$ and $k\in \mathbb N$, a section $\xi$ of $S^k (A)\to M$ may be interpreted as a function $f_\xi$ on $A^*$ as\[ f_\xi(\alpha)=\langle \alpha^{\odot k}, \xi\rangle,\;\text{for all}\,\; \alpha\in \Gamma(A^*)\] which is fiberwise polynomial and homogeneous of degree $k$. 
\end{remark}

\begin{definition}
Let $(A, [\cdot\,,\cdot]_A, \rho)$ a Lie algebroid over $M$. For every $P \in \mathcal U^{\leq k}(A)\simeq \Gamma(S^{\leq k}(A))$,  the \emph{symbol} $\sigma_P $ of $P$ is the associated function on $A^*$. 
\end{definition}

\subsubsection{Definition I: through the holonomy Lie algebroids}\label{def:I}Let $\mathcal{F}$ be a singular foliation. We now define the symbol as an element in $ \mathcal U (\mathcal F)$. For every leaf $ L$ of a singular foliation $\mathcal F $ on a manifold $M$, there exists a natural restriction map
 $$ \mathcal F \longrightarrow \Gamma(A_L), X\longmapsto X|_L $$
where $A_L $ is the holonomy Lie algebroid of the leaf $L$. This map is $\mathcal C^\infty(M) $-linear and is a Lie algebra morphism, so that it induces an algebra morphism 
 $$\begin{array}{ccc} U (\mathcal F) & \to &  U(\Gamma(A_L)) \\ P& \mapsto &P|_L \end{array} $$ from the algebra of longitudinal operators $U (\mathcal F)  $ to the universal enveloping algebra $U(\Gamma(A_L)) $.
 It goes to the quotient to induce an algebra morphism 
 \begin{equation} \label{eq:PofL}\begin{array}{ccc} \mathcal U (\mathcal F) & \to &  \mathcal U(A_L) \\ P& \mapsto &P|_L \end{array} \end{equation}  from the universal enveloping algebra $\mathcal U (\mathcal F)  $ of the singular foliation $ \mathcal F$ to the universal enveloping algebra $\mathcal U(A_L) $ of the Lie algebroid $A_L\to L$.

Therefore, for a given $P \in \mathcal U^{\leq k}(\mathcal F)$ of degree  $\leq k $, consider, for every leaf $L$, the symbol $\sigma_{P|_L} $. It is a fiberwise homogeneous of degree $k$ smooth function on $ A_L^*$.

 \begin{remark}
     For any leaf $L$, the realization of $P$, restricted to $L$,  is a differential operator in the usual sense on $A_L$. For a regular leaf, $ A_L=TL$ and $\sigma_{P|_L}$ is simply the usual symbol \cite{NistorPingAlan} of this differential operator.
 \end{remark}
 
 \begin{definition}
     We call \emph{symbol of $P\in \mathcal{U}^{\leq k}(\mathcal{F})$},  collection $(L, \sigma_{P|_L)})_{L\in M/\mathcal{F}}$  made of the functions $ \sigma_{P|_L}$. We denote it by $\sigma_P $.
 \end{definition}

\subsubsection{Definition II: through Nash blowup}\label{def:II}
Let $(M, \mathcal{F})$ be a singular foliation. We shall assume that
\begin{enumerate}
    \item $\mathcal{F}$ is finitely generated so that there exist anchored bundles $A\to TM$ whose images are $\mathcal{F}$.

\item that the regular leaves of $(M,\mathcal F) $ are all the same dimension, so that the Nash blowup
 $$ \left(M_\mathcal{F}, \mathfrak D_\mathcal F \right)   $$ makes sense. See Section \ref{sec:kernel}.
\end{enumerate} Recall from \S \ref{sec:Nash} that the Nash blowup is a Debord singular foliation, whose associated Lie algebroid is the canonical quotient bundle $\mathfrak{D}_\mathcal{F}$ of the Grassmann bundle, restricted to $ M_\mathcal F$ \cite{louis2024nash,louis2023series}. 
Consider the lift
 $$ \mathcal F \to \Gamma(\mathfrak{D}_\mathcal{F})$$
 mapping $X \in \mathcal F$ to the unique section of $\Gamma(\mathfrak{D}_\mathcal{F})$ corresponding to $ p^!X$.
 This lift is \begin{enumerate}
     \item  a $\mathcal C^\infty(M) $-module morphism and \item  a Lie algebra morphism.
 \end{enumerate}
 This implies that it rises to an algebra morphism
   $$  \mathrm{Diff} (\mathcal F) \longrightarrow \mathcal U\left({\mathfrak{D}_\mathcal{F}}\right),\; D \longmapsto  p^!{D}$$
   where $\mathcal U\left({\mathfrak{D}_\mathcal{F}}\right)$ stands for the universal algebra 
 of the Nash-blowup Lie algebroid $ {\mathfrak{D}_\mathcal{F}}$. Since $ p^!{D}$ is an element of degree $\leq k$ of the universal Lie algebroid $ {\mathfrak{D}_\mathcal{F}}$, its symbol is a (perfectly well-defined) element of $\Gamma(S^k  {(\mathfrak{D}_\mathcal{F})})$, or, equivalently, a function on $\mathfrak{D}_\mathcal{F}^*$, which is fiberwise polynomial and homogeneous of degree $k$.
 Since its restriction to the regular part is entirely determined by the image of $D$ through the realization map, and since this symbol continuously depends on the base point by construction, the next definition makes sense. 

 \begin{definition}
 Let $D$ be a longitudinal differential operator of degree $\leq k $ of $\mathcal{F}$.
     We call \emph{symbol} of $D$ of degree $k$ the symbol of the element $p^! {D}$ in the universal algebra $\mathcal U({\mathfrak{D}_\mathcal{F}}) $ of the Nash blowup algebroid ${\mathfrak{D}_\mathcal{F}}$.

     By construction, this symbol that we denote $\sigma_{D} $ is a fiberwise homogeneous of degree $k$ polynomial function on the dual of the Nash blowup Lie algebroid ${\mathfrak{D}_\mathcal{F}}$.
 \end{definition}

\subsection{Relation between Definition I and II}
We have defined in \S \ref{def:I} and \S \ref{def:II} two symbols associated with a longitudinal differential operator on a singular foliation $\mathcal
F$: \begin{enumerate}
    \item one for the universal enveloping algebra $\mathcal U(\mathcal F) $, defined as a family functions on the duals of the holonomy algebroids $(A_L)_{L\in M/\mathcal F}$ of $\mathcal F $\item 
    and one for the algebra of longitudinal differential operators ${\mathrm{Diff}}(\mathcal F)$, defined as a function on the dual of the Nash algebroid $(M_\mathcal F, \mathfrak D_\mathcal F)$.
\end{enumerate}
Recall that by Proposition \ref{prop:NHandNash}, the Nash algebroid and the holonomy algebroids are related through a fiberwise injective vector bundle morphism $\mathfrak J\colon \mathfrak D_\mathcal{F}^*\to \coprod _{L\in M/\mathcal{F}}A_L^*$ over $p\colon M_\mathcal F\to M$.

These two symbols are related as follows.

\begin{proposition}
\label{prop:pullbackHN}
Let $D \in {\mathrm{Diff}}^{\leq k}(\mathcal F)$  be a longitudinal differential operator of degree $\leq k $ and let $P \in \mathcal U^{\leq k}(\mathcal F)$ be any element whose realization is $D$. The  symbol $ \sigma_D$ is the pull-back through  $\mathfrak J$ $$  \xymatrix{\mathfrak J \colon {\mathfrak D_\mathcal{F}^*} \ar[d] \ar[r] & \ar[d] \coprod_{L \in M/\mathcal{F}} A_L^*\\
M_\mathcal F \ar[r]^{p}&M}$$ of the restriction of the  symbol $\sigma_P$ of $P$ to the Helffer-Nourrigat cone bundle of $ \mathcal F$.  Namely,
 $$  \sigma_D = \mathfrak J^* \left. \sigma_P \right|_{{\mathrm{HN}}(\mathcal F)}  .$$
\end{proposition}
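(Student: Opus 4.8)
The plan is to reduce the statement to a purely local, leafwise computation, where both symbols become ordinary symbols of differential operators on genuine Lie algebroids, and then to chase the relevant morphisms through the diagram. First I would recall the two relevant algebra morphisms: the lift $\mathcal F\to\Gamma(\mathfrak D_\mathcal F)$, $X\mapsto p^!X$, which extends to $\mathrm{Diff}(\mathcal F)\to\mathcal U(\mathfrak D_\mathcal F)$, $D\mapsto p^!D$; and the restriction map $\mathcal F\to\Gamma(A_L)$, $X\mapsto X|_L$, which extends to $\mathcal U(\mathcal F)\to\mathcal U(A_L)$, $P\mapsto P|_L$. The key observation is that these two constructions are compatible in the following precise sense: for a point $n\in M_\mathcal F$ lying over $m\in L$, the fiber $(\mathfrak D_\mathcal F)_n$ is a quotient of $A_m$ that factors through $A_m/\mathrm{Sker}\,\rho_m=(A_L)_m$ (this is exactly the content of Item 1 of the ``important features'' list in \S\ref{sec:Nash}, which says $\mathrm{Sker}\,\rho_m\subseteq V\subseteq\ker\rho_m$ for $V\in p^{-1}(m)$, together with the exact sequence \eqref{eq:exactsequence}). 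Dually, $\mathfrak j$ and $\mathfrak J$ realize $(\mathfrak D_\mathcal F)_n^*$ as the subspace $V^\circ\subseteq(A_L)_m^*\subseteq A_m^*$. So the claim is that on symbols, $p^!$ followed by ``evaluate symbol at a point of $\mathfrak D_\mathcal F^*$'' equals ``restrict to $L$, take symbol, evaluate at the corresponding point of $A_L^*$''.

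\textbf{Main steps.} First I would establish the commutativity, at the level of universal enveloping algebras, of the square relating $\mathcal U(\mathcal F)$, $\mathcal U(\mathfrak D_\mathcal F)$, $\mathcal U(A_L)$ and $\mathcal U((A_L)_n)$ (restriction of $A_L$ along $M_\mathcal F\supset p^{-1}(L)\to L$): the two composites $\mathcal U(\mathcal F)\to\mathcal U(\mathfrak D_\mathcal F)\to\mathcal U(A_L|_{p^{-1}(L)})$ and $\mathcal U(\mathcal F)\to\mathcal U(A_L)\to\mathcal U(A_L|_{p^{-1}(L)})$ agree. This follows because both are induced by the same map on generators: a vector field $X\in\mathcal F$ maps, on one side, to $p^!X$ then projected into $\Gamma(A_L|_{p^{-1}(L)})$ via $\mathfrak D_\mathcal F\to p^*A_L$ (obtained from \eqref{eq:exactsequence} since $\mathrm{Sker}\,\rho\subseteq T$), and on the other side to $X|_L$ then pulled back along $p$; both give the image of $X$ in $\mathcal F|_\mathcal U/\mathcal I_{(L\cap\mathcal U)_m}\mathcal F|_\mathcal U$ pulled to $p^{-1}(L)$, using the identification \eqref{eq:holonomyalg}. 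The second step is to pass to symbols: the symbol map $\mathcal U^{\le k}\to\Gamma(S^k(-))$ is natural with respect to morphisms of Lie algebroids that are fiberwise surjective on the top (a quotient of almost-Lie/Lie algebroids induces a surjection of graded symmetric algebras on top degree), so naturality gives that $\sigma_{p^!D}$, pushed to $S^k(A_L|_{p^{-1}(L)})$, equals the pullback of $\sigma_{P|_L}\in\Gamma(S^k(A_L))$. The third step is to dualize: a section of $S^k(E)$ is the same as a fiberwise homogeneous degree-$k$ function on $E^*$, and under a fiberwise-injective dual map $\mathfrak J\colon\mathfrak D_\mathcal F^*\hookrightarrow\coprod_L A_L^*$ the operation ``push a section of $S^k$ forward along the quotient $p^*A_L\twoheadrightarrow\mathfrak D_\mathcal F$'' corresponds precisely to ``restrict the function to the image $\mathrm{HN}(\mathcal F)$ and pull back along $\mathfrak J$''. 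Concatenating: $\sigma_D=\sigma_{p^!D}=\mathfrak J^*\big(\sigma_{P|_L}\big|_{\mathrm{HN}(\mathcal F)}\big)=\mathfrak J^*\big(\sigma_P|_{\mathrm{HN}(\mathcal F)}\big)$, the last equality because $\sigma_P$ is by definition the collection $(\sigma_{P|_L})_L$.

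\textbf{Main obstacle.} The genuinely delicate point is the first step: verifying that the two morphisms into $\mathcal U(A_L|_{p^{-1}(L)})$ agree, because $p^!$ and ``$\cdot|_L$'' are defined by rather different universal properties ($p^!X$ is the unique $p$-projectable lift of $X$; $X|_L$ uses the identification \eqref{eq:holonomyalg}), and one must carefully track how the tautological bundle $T$ of \eqref{eq:exactsequence} sits inside both $\ker(p^*A\to\mathfrak D_\mathcal F)$ and (after restriction to a leaf) the strong kernel, so that the composite $\mathfrak D_\mathcal F\to p^*A_L$ is well-defined and matches the pullback of $X\mapsto X|_L$. Once this compatibility of the two ``restriction'' operations with the structure maps is in hand — essentially the statement that the diagram of anchored/Lie algebroids $\mathfrak D_\mathcal F\to p^*A\to p^*A_L$ refines the relation between the Nash algebroid and the holonomy algebroids already encoded in Proposition \ref{prop:NHandNash} — the rest is formal: symbol maps are functorial and dualization converts ``project a polynomial section'' into ``restrict the corresponding function and pull back.'' I would also note that everything is local in $M$ (we may shrink to $\mathcal U$ where $(L\cap\mathcal U)_m$ is embedded and an almost-Lie bracket on $A$ exists), which is harmless since symbols of degree $k$ don't depend on the choice of (local) connection or bracket; this is why the argument extends to the analytic and holomorphic settings as well.
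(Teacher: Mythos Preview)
Your algebraic diagram chase is in the right spirit, but the paper's proof is a one-liner that sidesteps all of it: both $\sigma_D$ and $\mathfrak J^*(\sigma_P|_{\mathrm{HN}(\mathcal F)})$ are continuous functions on $\mathfrak D_\mathcal F^*$ (fiberwise polynomial of degree $k$), and over the regular part $p^{-1}(M_{\mathrm{reg}})$---where $\mathfrak D_\mathcal F\simeq TL$ for the regular leaf $L$ through each point and $\mathfrak J$ is an isomorphism onto its image---both symbols reduce to the ordinary principal symbol of the same differential operator $D|_L$. Since $p^{-1}(M_{\mathrm{reg}})$ is dense in $M_\mathcal F$, continuity forces equality everywhere. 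No leafwise universal-enveloping-algebra diagram is needed.

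Your approach attempts to verify the identity pointwise over \emph{every} leaf, by building a commuting square of enveloping algebras. This is more informative in principle (it would give a direct algebraic explanation valid fiber by fiber, not just by continuity), but it runs into two concrete problems. First, the map you invoke as ``$\mathfrak D_\mathcal F\to p^*A_L$'' goes the wrong way: from $\mathrm{Sker}\,\rho_m\subseteq V$ one obtains a \emph{surjection} $p^*A_L\twoheadrightarrow\mathfrak D_\mathcal F$ over $p^{-1}(L)$ (quotient by the larger subspace), and dually $\mathfrak J$ is the resulting inclusion of duals; your square has to be rewritten accordingly. Second, you freely form $\mathcal U(A_L|_{p^{-1}(L)})$ and restrict Lie algebroids to $p^{-1}(L)$, but $M_\mathcal F$ (and hence $p^{-1}(L)$) need not be smooth, so these objects require care to even define---precisely the kind of issue the density argument is designed to avoid. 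Once those two points are fixed, your naturality-of-symbols step is fine and the argument would go through; but the paper's route is both shorter and robust to the possible non-smoothness of the blowup.
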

\begin{proof}
It holds true on the regular part $M_{\mathrm{reg}}$ which is an open dense subset of $M$. The result follows by density.
\end{proof}

\begin{remark}
A notable consequence of this statement is that the restriction to ${\mathrm{HN}}(\mathcal F)$ of the symbol of an element $P\in \mathcal{U}(\mathcal{F})$ depends only on its realization $ \underline{P}$. 

{In particular, Proposition \ref{prop:pullbackHN} shows that the Helffer–Nourrigat cone is precisely the geometric locus on which the symbol of a longitudinal differential operator becomes intrinsic, independently of the choice of representative in $\mathcal U(\mathcal F)$.}
\end{remark}

\subsection{Longitudinally elliptic differential operator}
Now, recall that a differential operator is said to be elliptic if its symbol vanishes only at the origin.  For a Lie algebroid \cite{NistorPingAlan}, an element in the universal Lie algebra is said to be elliptic if its symbol vanishes only along the zero section.

Here is, in our opinion, the correct definition for a singular foliation. 

\begin{definition}\label{def:symbol-fol}
A longitudinal differential operator $D$ of degree $k$  on a singular foliation $\mathcal{F}$  is said to be \emph{longitudinally elliptic} if its symbol\footnote{Recall that the latter is a function on the dual $\mathfrak D_\mathcal{ F}^*$ of the Nash blowup Lie algebroid $\mathfrak D_\mathcal{ F}\to M_\mathcal{F}$ of $\mathcal{F}$.} $ \sigma_{D}$ is a strictly positive function, except on the zero section.
\end{definition}

\begin{remark} Let $D$ be a longitudinal differential operator on  a singular foliation $\mathcal F$.
    \begin{enumerate}
        \item $D$ is longitudinally elliptic if and only if  $ p^!D$ is elliptic for the Nash blowup Lie algebroid $ {\mathfrak{D}_\mathcal F}\to M_\mathcal F$ in the sense of \cite{NistorPingAlan, zbMATH06351314}, which seems to us to be a convincing justification of the notion.
\item Now, choose some $ P \in \mathcal U(\mathcal F)$ whose realization is $D$.
The symbol $ \sigma_P$ does \emph{not} need to be strictly positive outside the Helffer-Nourrigat cone by Proposition \ref{prop:pullbackHN},  so our definition does \emph{not} imply that $ P|_L \in \mathcal U(A_L)$ (see Equation \eqref{eq:PofL}) is elliptic for all leaves (although it certainly has to be elliptic on regular leaves).
    \end{enumerate}
\end{remark}

{Definition \ref{def:symbol-fol} above should be understood as a geometric ellipticity condition rather than a hypoellipticity criterion in the sense of Helffer-Nourrigat. In general, maximal hypoellipticity for longitudinal operators on singular foliations depends on finer representation-theoretic properties of the symbol, as shown in \cite{androulidakis2022pseudodifferential}, whereas Definition \ref{def:symbol-fol} only requires positivity on the desingularized symbol space $\mathfrak D_\mathcal F^*$. Nevertheless, this condition is natural because, after pullback to the Nash algebroid, the operator becomes elliptic in the usual Lie algebroid sense \cite{NistorPingAlan}. This suggests that the Nash resolution isolates the geometric part of ellipticity underlying the analytic conditions appearing in the Helffer–Nourrigat theory. A more detailed investigation of this relationship will be carried out in forthcoming work.}\\

\noindent
\textbf{Acknowledgments.} I acknowledge valuable discussions with C. Laurent-Gengoux, and L. Ryvkin during the preparation of this paper.

\bibliographystyle{alpha}
\bibliography{main}

@article{Turki,
  title={A {L}agrangian for Hamiltonian vector fields on singular Poisson manifolds},
  author={Turki, Yahya},
  journal={Journal of Geometry and Physics},
  volume={90},
  pages={71--87},
  year={2015},
  publisher={Elsevier}
}

@inproceedings{debord:hal-05035630,
  TITLE = {{Hypoellipticit{\'e} de polyn{\^o}mes de champs de vecteurs et conjectures de Helffer et Nourrigat}},
  AUTHOR = {Debord, Claire},
  URL = {https://hal.science/hal-05035630},
  BOOKTITLE = {{S{\'e}minaire Bourbaki}},
  ADDRESS = {Paris, France},
  ORGANIZATION = {{Association des collaborateurs de Nicolas Bourbaki}},
  YEAR = {2025},
  MONTH = Mar,
  KEYWORDS = {Op{\'e}rateurs hypoelliptiques ; Calculs pseudodifferentiels ; Groupo{\"i}des de Lie ; C{\^o}nes tangents},
  PDF = {https://hal.science/hal-05035630v1/file/Exp1236-DebordV1.pdf},
  HAL_ID = {hal-05035630},
  HAL_VERSION = {v1},
}

@article{rinehart1963differential,
  author = {Rinehart, George S.},
  title = {Differential forms on general commutative algebras},
  journal = {Trans. Amer. Math. Soc.},
  volume = {108},
  pages = {195--222},
  year = {1963},
  doi = {10.1090/S0002-9947-1963-0154906-3},
  publisher = {American Mathematical Society},
  review = {MathSciNet MR0154906},
  msc = {Primary 17.30; Secondary 14.52}
}

@article{Moerdijk2010,
  author = {Moerdijk, I. and Mrčun, J.},
  title = {On the universal enveloping algebra of a {L}ie algebroid},
  journal = {Proc. Amer. Math. Soc.},
  volume = {138},
  year = {2010},
  pages = {3135--3145},
  doi = {10.1090/S0002-9939-10-10347-5},
  publisher = {American Mathematical Society},
  note = {Published electronically: March 24, 2010}
}

@article{louis2023series,
  author  = {Louis, Ruben},
  title   = {A series of {N}ash resolutions of a singular foliation},
  journal = {Journal of Noncommutative Geometry},
  volume  = {20},
  number  = {2},
  pages   = {745--785},
  year    = {2026},
  doi     = {10.4171/JNCG/630},
}

@incollection{LLL2,
  author    = {Laurent-Gengoux, Camille and Louis, Ruben and Ryvkin, Leonid},
  title     = {Canonical Geometric and Algebraic Structures Hidden Behind a Singular Foliation},
  booktitle = {Advances in Poisson Geometry},
  editor    = {Garcia-Fernandez, Mario and Ponte, David Iglesias and Miranda, Eva and Oms, Carlos and Rubio, Raimundo},
  series    = {Advanced Courses in Mathematics - CRM Barcelona},
  publisher = {Birkh{\"a}user},
  address   = {Cham},
  year      = {2025},
  doi       = {10.1007/978-3-031-86657-9_4},
}

@incollection{LLL1,
  author    = {Laurent-Gengoux, Camille and Louis, Ruben and Ryvkin, Leonid},
  title     = {What Is a Singular Foliation?},
  booktitle = {Advances in Poisson Geometry},
  editor    = {Garcia-Fernandez, Mario and Ponte, David Iglesias and Miranda, Eva and Oms, Carlos and Rubio, Raimundo},
  series    = {Advanced Courses in Mathematics - CRM Barcelona},
  publisher = {Birkh{\"a}user},
  address   = {Cham},
  year      = {2025},
  doi       = {10.1007/978-3-031-86657-9_3},
}

@article{JPTS,
  title={Nash residues of singular holomorphic foliations},
  author={Jean-{P}aul {B}rasselet and {T}atsuo {S}uwa},
  journal={Asian Journal of Mathematics},
  year={2000},
  volume={4},
  pages={37-50},
  url={DOI:https://dx.doi.org/10.4310/AJM.2000.v4.n1.a4}
}

@Article{Rossi,
 Author = {Rossi, Hugo},
 Title = {Picard variety of an isolated singular point},
 FJournal = {Rice University Studies},
 Journal = {Rice Univ. Stud.},
 ISSN = {0035-4996},
 Volume = {54},
 Number = {4},
 Pages = {63--73},
 Year = {1968},
 Language = {English},
 Keywords = {32H25},
 zbMATH = {3287091},
 Zbl = {0179.40103}
}

@Article{Sussmann1,
 Author = {Sussmann, Hector J.},
 Title = {Orbits of families of vector fields and integrability of distributions},
 FJournal = {Transactions of the American Mathematical Society},
 Journal = {Trans. Am. Math. Soc.},
 ISSN = {0002-9947},
 Volume = {180},
 Pages = {171--188},
 Year = {1973},
 Language = {English},
 DOI = {10.2307/1996660},
 Keywords = {58A30,37-XX,34H05,34C40,93B05,93B03},
 zbMATH = {3431564},
 Zbl = {0274.58002}
}

@Article{Sussmann2,
 Author = {Sussmann, Hector J.},
 Title = {Orbits of families of vector fields and integrability of systems with singularities},
 FJournal = {Bulletin of the American Mathematical Society},
 Journal = {Bull. Am. Math. Soc.},
 ISSN = {0002-9904},
 Volume = {79},
 Pages = {197--199},
 Year = {1973},
 Language = {English},
 DOI = {10.1090/S0002-9904-1973-13152-0},
 Keywords = {58A30,93B03},
 zbMATH = {3403518},
 Zbl = {0255.58001}
}

@Article{Stepan1,
 Author = {Stefan, P.},
 Title = {Accessibility and foliations with singularities},
 FJournal = {Bulletin of the American Mathematical Society},
 Journal = {Bull. Am. Math. Soc.},
 ISSN = {0002-9904},
 Volume = {80},
 Pages = {1142--1145},
 Year = {1974},
 Language = {English},
 DOI = {10.1090/S0002-9904-1974-13648-7},
 Keywords = {57R30,58A30,93B03},
 zbMATH = {3459594},
 Zbl = {0293.57015}
}

@Article{Stepan2,
 Author = {Stefan, P.},
 Title = {Integrability of systems of vector fields},
 FJournal = {Journal of the London Mathematical Society. Second Series},
 Journal = {J. Lond. Math. Soc., II. Ser.},
 ISSN = {0024-6107},
 Volume = {21},
 Pages = {544--556},
 Year = {1980},
 Language = {English},
 DOI = {10.1112/jlms/s2-21.3.544},
 Keywords = {58A30},
 zbMATH = {3673216},
 Zbl = {0432.58002}
}

@Article{zbMATH05843246,
 Author = {Androulidakis, Iakovos and Skandalis, Georges},
 Title = {Pseudodifferential calculus on a singular foliation},
 FJournal = {Journal of Noncommutative Geometry},
 Journal = {J. Noncommut. Geom.},
 ISSN = {1661-6952},
 Volume = {5},
 Number = {1},
 Pages = {125--152},
 Year = {2011},
 Language = {English},
 DOI = {10.4171/JNCG/72},
 Keywords = {53C12,57R30,47G30,46L87},
 zbMATH = {5843246},
 Zbl = {1216.53029}
}

@inproceedings{Connes1982ASO,
  title={A survey of foliations and operator algebras},
  author={Alain Connes},
  year={1982},
  url={https://api.semanticscholar.org/CorpusID:15726356}
}

@article{bekaert2023geometric,
  title={Geometric tool kit for higher spin gravity (Part II): An introduction to {L}ie algebroids and their enveloping algebras},
  author={Bekaert, Xavier},
  journal={International Journal of Modern Physics A},
  volume={38},
  number={25},
  pages={2330013},
  year={2023},
  publisher={World Scientific}
}

@book{Hormander1998,
  author       = {Lars Hörmander},
  title        = {The Analysis of Linear Partial Differential Operators I: Distribution Theory and Fourier Analysis},
  series       = {Grundlehren der mathematischen Wissenschaften},
  volume       = {256},
  publisher    = {Springer Berlin Heidelberg},
  year         = {1998},
  doi          = {10.1007/978-3-642-96750-4},
  edition      = {1},
  isbn         = {978-3-540-12104-8},
  url          = {https://doi.org/10.1007/978-3-642-96750-4},
  keywords     = {Topological Groups, Lie Groups},
  note         = {eBook package: Springer Book Archive}
}

@Article{zbMATH06351314,
 Author = {Pflaum, M. J. and Posthuma, H. and Tang, X.},
 Title = {The index of geometric operators on {Lie} groupoids},
 FJournal = {Indagationes Mathematicae. New Series},
 Journal = {Indag. Math., New Ser.},
 ISSN = {0019-3577},
 Volume = {25},
 Number = {5},
 Pages = {1135--1153},
 Year = {2014},
 Language = {English},
 DOI = {10.1016/j.indag.2014.07.014},
 Keywords = {22A22,53D17,53D18},
 zbMATH = {6351314},
 Zbl = {1300.22002}
}

@Article{NistorPingAlan,
 Author = {Nistor, Victor and Weinstein, Alan and Xu, Ping},
 Title = {Pseudodifferential operators on differential groupoids},
 FJournal = {Pacific Journal of Mathematics},
 Journal = {Pac. J. Math.},
 ISSN = {1945-5844},
 Volume = {189},
 Number = {1},
 Pages = {117--152},
 Year = {1999},
 Language = {English},
 DOI = {10.2140/pjm.1999.189.117},
 Keywords = {58H05,58J40,35S35,22A22,22E65},
 zbMATH = {1344319},
 Zbl = {0940.58014}
}

@Article{zbMATH07369649,
 Author = {Laurent-Gengoux, Camille and Sti{\'e}non, Mathieu and Xu, Ping},
 Title = {Poincar{\'e}-{Birkhoff}-{Witt} isomorphisms and {Kapranov} dg-manifolds},
 FJournal = {Advances in Mathematics},
 Journal = {Adv. Math.},
 ISSN = {0001-8708},
 Volume = {387},
 Pages = {62},
 Note = {Id/No 107792},
 Year = {2021},
 Language = {English},
 DOI = {10.1016/j.aim.2021.107792},
 Keywords = {58H05,22A22},
 zbMATH = {7369649},
 Zbl = {1468.58004}
}

@Article{zbMATH01287585,
 Author = {Huebschmann, Johannes},
 Title = {Duality for {Lie}-{Rinehart} algebras and the modular class.},
 FJournal = {Journal f{\"u}r die Reine und Angewandte Mathematik},
 Journal = {J. Reine Angew. Math.},
 ISSN = {0075-4102},
 Volume = {510},
 Pages = {103--159},
 Year = {1999},
 Language = {English},
 DOI = {10.1515/crll.1999.043},
 Keywords = {53D17,17B63,58H05},
 zbMATH = {1287585},
 Zbl = {1034.53083}
}

@misc{moerdijk2008universalenvelopingalgebralierinehart,
      title={On the universal enveloping algebra of a {L}ie-{R}inehart algebra}, 
      author={I. Moerdijk and J. Mrcun},
      year={2008},
      eprint={0801.3929},
      archivePrefix={arXiv},
      primaryClass={math.QA},
      url={https://arxiv.org/abs/0801.3929}, 
}

@misc{maakestad2022envelopingalgebraliealgebra,
      title={The enveloping algebra of a {L}ie algebra of differential operators}, 
      author={Helge Oystein Maakestad},
      year={2022},
      eprint={1903.04285},
      archivePrefix={arXiv},
      primaryClass={math.AG},
      url={https://arxiv.org/abs/1903.04285}, 
}

@Article{CamilleLouis,
 Author = {Laurent-Gengoux, Camille and Louis, Ruben},
 Title = {Lie-{Rinehart} algebras {{\(\simeq\)}} acyclic Lie {{\(\infty \)}}-algebroids},
 FJournal = {Journal of Algebra},
 Journal = {J. Algebra},
 ISSN = {0021-8693},
 Volume = {594},
 Pages = {1--53},
 Year = {2022},
 Language = {English},
 DOI = {10.1016/j.jalgebra.2021.11.023},
 Keywords = {18N99,53C12,55U35,16S38,16W10,17B60,53D17},
 zbMATH = {7459366},
 Zbl = {1482.18016}
}

@Article{zbMATH07453484,
 Author = {Huebschmann, Johannes},
 Title = {On the history of {Lie} brackets, crossed modules, and {Lie}-{Rinehart} algebras},
 FJournal = {Journal of Geometric Mechanics},
 Journal = {J. Geom. Mech.},
 ISSN = {1941-4889},
 Volume = {13},
 Number = {3},
 Pages = {385--402},
 Year = {2021},
 Language = {English},
 DOI = {10.3934/jgm.2021009},
 Keywords = {17-03,17B55,01A60,17B66,18G45,12G05,12H05,20J05,53-03,53D17,55Q15,58H05},
 zbMATH = {7453484}
}

@book{MR2075590,
    AUTHOR = {Huebschmann, Johannes},
     TITLE = {Lie-{R}inehart algebras, descent, and quantization},
 BOOKTITLE = {Galois theory, {H}opf algebras, and semiabelian categories},
    SERIES = {Fields Inst. Commun.},
    VOLUME = {43},
     PAGES = {295--316},
 PUBLISHER = {Amer. Math. Soc., Providence, RI},
      YEAR = {2004},
   MRCLASS = {17B63 (14L30 53D17 53D50)},
  MRNUMBER = {2075590},
MRREVIEWER = {Zhangju Liu},
       DOI = {10.1090/memo/0814},
       URL = {https://doi.org/10.1090/memo/0814},
}

@article{MR1625610,
   author={Huebschmann, Johannes},
   title={{L}ie-{R}inehart algebras, {G}erstenhaber algebras and
   {B}atalin-{V}ilkovisky algebras},
   journal={Ann. Inst. Fourier (Grenoble)},
   volume={48},
   year={1998},
   number={2},
   pages={425--440},
   issn={0373-0956},
   review={\MR{1625610}},
}

@book {Mackenzie,
    AUTHOR = {Mackenzie, Kirill C. H.},
     TITLE = {General theory of {L}ie groupoids and {L}ie algebroids},
    SERIES = {London Mathematical Society Lecture Note Series},
    VOLUME = {213},
 PUBLISHER = {Cambridge University Press, Cambridge},
      YEAR = {2005},
     PAGES = {xxxviii+501},
      ISBN = {978-0-521-49928-3; 0-521-49928-3},
   MRCLASS = {58H05 (53D17)},
  MRNUMBER = {2157566},
MRREVIEWER = {Rui Loja Fernandes},
}

@article{mohsen2021blow,
  title={Blow-up groupoid of singular foliations},
  author={Mohsen, Omar},
  journal={arXiv preprint arXiv:2105.05201},
  year={2021}
}

@article{Helffer1980,
  author = {Bernard Helffer and Jean Nourrigat},
  title = {Hypoellipticité maximale pour des opérateurs polynômes de champs de vecteurs},
  journal = {Journées équations aux dérivées partielles},
  year = {1980},
  pages = {1--2},
  publisher = {Ecole polytechnique},
  language = {French},
  url = {http://eudml.org/doc/93038},
  keywords = {maximal hypoelliptic operator, differential operators on manifolds defined by a polynomial of vector fields}
}

@article{androulidakis2022pseudodifferential,
  title={A pseudodifferential calculus for maximally hypoelliptic operators and the Helffer-Nourrigat conjecture},
  author={Androulidakis, Iakovos and Mohsen, Omar and Yuncken, Robert},
  journal={arXiv preprint arXiv:2201.12060},
  year={2022}
}

@article{popescu2019almost,
  title={Almost {L}ie algebroids and characteristic classes},
  author={Popescu, Marcela and Popescu, Paul and others},
  journal={SIGMA. Symmetry, Integrability and Geometry: Methods and Applications},
  volume={15},
  pages={021},
  year={2019},
  publisher={SIGMA. Symmetry, Integrability and Geometry: Methods and Applications}
}

@article{Ali-Sinan,
     author = {Sert\"oz, Sinan},
     title = {Residues of singular holomorphic foliations},
     journal = {Compositio Mathematica},
     pages = {227--243},
     publisher = {Kluwer Academic Publishers},
     volume = {70},
     number = {3},
     year = {1989},
     zbl = {0683.32008},
     mrnumber = {1002044},
     language = {en},
     url = {http://www.numdam.org/item/CM_1989__70_3_227_0/}
}

@article{Hermann,
 ISSN = {00959057, 19435274},
 note={\url{http://www.jstor.org/stable/24900924}},
 author = {Robert Hermann},
 journal = {Journal of Mathematics and Mechanics},
 number = {2},
 pages = {303--315},
 publisher = {Indiana University Mathematics Department},
 title = {The Differential Geometry of Foliations, {II}},
 volume = {11},
 year = {1962}
}

@Article{zbMATH03423310,
 Author = {Baum, Paul and Bott, Raoul},
 Title = {Singularities of holomorphic foliations},
 FJournal = {Journal of Differential Geometry},
 Journal = {J. Differ. Geom.},
 ISSN = {0022-040X},
 Volume = {7},
 Pages = {279--342},
 Year = {1972},
 Language = {English},
 DOI = {10.4310/jdg/1214431158},
 Keywords = {57R30,57R20,32Q99,53C05},
 zbMATH = {3423310},
 Zbl = {0268.57011}
}

@article {Cerveau,
    AUTHOR = {Cerveau, Dominique},
     TITLE = {Distributions involutives singuli\`eres},
   JOURNAL = {Ann. Inst. Fourier (Grenoble)},
  FJOURNAL = {Universit\'{e} de Grenoble. Annales de l'Institut Fourier},
    VOLUME = {29},
      YEAR = {1979},
    NUMBER = {3},
     PAGES = {xii, 261--294},
      ISSN = {0373-0956},
   MRCLASS = {58F14 (58A30 58C27)},
  MRNUMBER = {552968},
MRREVIEWER = {Ng\^{o} Van Qu\^{e}},
}

@article{debord2025hypoellipticit,
  title={Hypoellipticit\'e de polyn\^omes de champs de vecteurs et conjectures de {H}elffer etNourrigat},
  author={Debord, Claire},
  journal={arXiv preprint arXiv:2504.13508},
  year={2025}
}

@article{AZ,
abstract = {Given a singular foliation, we attach an “essential isotropy” group to each of its leaves, and show that its discreteness is the integrability obstruction of a natural Lie algebroid over the leaf. We show that a condition ensuring discreteness is the local surjectivity of a transversal exponential map associated with the maximal ideal of vector fields prescribed to be tangent to the foliation. The essential isotropy group is also shown to control the smoothness of the holonomy cover of the leaf (the associated fiber of the holonomy groupoid), as well as the smoothness of the associated isotropy group. Namely, the (topological) closeness of the essential isotropy group is a necessary and sufficient condition for the holonomy cover to be a smooth (finite-dimensional) manifold and the isotropy group to be a Lie group. These results are useful towards understanding the normal form of a singular foliation around a compact leaf. At the end of this article we briefly outline work of ours on this normal form, to be presented in a subsequent paper.},
author = {Androulidakis, Iakovos and Zambon, Marco},
address = {Berlin/Heidelberg},
copyright = {Springer-Verlag Berlin Heidelberg 2013},
issn = {0025-5874},
journal = {Mathematische Zeitschrift},
keywords = {Isotropy ; Singularities ; 57R55 ; 57R30 ; Mathematics, general ; Mathematics ; Foliations ; Secondary 58H05},
language = {eng},
number = {3},
pages = {921-951},
publisher = {Springer Berlin Heidelberg},
title = {Smoothness of holonomy covers for singular foliations and essential isotropy},
volume = {275},
year = {2013},
}

@article{LLS,
 Author = {Camille {Laurent-Gengoux} and Sylvain {Lavau} and Thomas {Strobl}},
 Title = {{The universal {L}ie \(\infty\)-algebroid of a singular foliation}},
 FJournal = {{Documenta Mathematica}},
 Journal = {{Doc. Math.}},
 ISSN = {1431-0635; 1431-0643/e},
 Volume = {25},
 Pages = {1571--1652},
 Year = {2020},
 Publisher = {Deutsche Mathematiker-Vereinigung, Berlin},
 Language = {English},
 MSC2010 = {53C12 57R30 18G10 58H05},
 Zbl = {1453.53033}
}

@Book{CFM,
 Author = {Crainic, Marius and Fernandes, Rui Loja and M{\u{a}}rcu{\c{t}}, Ioan},
 Title = {Lectures on {Poisson} geometry},
 FSeries = {Graduate Studies in Mathematics},
 Series = {Grad. Stud. Math.},
 ISSN = {1065-7338},
 Volume = {217},
 ISBN = {978-1-4704-6667-1; 978-1-4704-6666-4},
 Year = {2021},
 Publisher = {Providence, RI: American Mathematical Society (AMS)},
 Language = {English},
 Keywords = {53-01,53D17},
 zbMATH = {7517376}
}

@Article{Crainic-Fernandes,
 Author = {Crainic, Marius and Fernandes, Rui Loja},
 Title = {Integrability of {Lie} brackets},
 FJournal = {Annals of Mathematics. Second Series},
 Journal = {Ann. Math. (2)},
 ISSN = {0003-486X},
 Volume = {157},
 Number = {2},
 Pages = {575--620},
 Year = {2003},
 Language = {English},
 DOI = {10.4007/annals.2003.157.575},
 Keywords = {22A22,20-XX},
 zbMATH = {2057953},
 Zbl = {1037.22003}
}

@article{SW,
  title={Poisson Geometry with a 3-Form Background},
  author={Pavol \v{S}evera and Alan J. Weinstein},
  journal={Progress of Theoretical Physics Supplement},
  year={2001},
  volume={144},
  pages={145-154},
  url={https://api.semanticscholar.org/CorpusID:17756587}
}

@Book{LPV,
 Author = {Laurent-Gengoux, Camille and Pichereau, Anne and Vanhaecke, Pol},
 Title = {Poisson structures},
 FSeries = {Grundlehren der Mathematischen Wissenschaften},
 Series = {Grundlehren Math. Wiss.},
 ISSN = {0072-7830},
 Volume = {347},
 ISBN = {978-3-642-31089-8; 978-3-642-31090-4},
 Year = {2012},
 Publisher = {Berlin: Springer},
 Language = {English},
 DOI = {10.1007/978-3-642-31090-4},
 Keywords = {53-02,53D17,17B63},
 zbMATH = {6054532},
 Zbl = {1284.53001}
}

@article{louis2024nash,
  author = {Ruben Louis},
  title = {On {N}ash resolution of (singular) {L}ie algebroids},
  journal = {Mathematische Zeitschrift},
  volume = {311},
  pages = {24},
  year = {2025},
  doi = {10.1007/s00209-025-03817-4},
  url = {https://doi.org/10.1007/s00209-025-03817-4}
}

@article{SwanRichardG,
year = {1962},
copyright = {Copyright 1962 American Mathematical Society},
issn = {0002-9947},
journal = {Transactions of the American Mathematical Society},
keywords = {Algebra ; Complex numbers ; Inner products ; Mathematical rings ; Mathematical vectors ; Polynomials ; Quaternions ; Tangents ; Topological theorems},
language = {eng},
number = {2},
pages = {264-277},
publisher = {American Mathematical Society},
title = {Vector {B}undles and {P}rojective Modules},
volume = {105},
author = {Swan, Richard G.},
note={\url{https://doi.org/10.1090/S0002-9947-1962-0143225-6 }},
}

@article{MoryeArchanaS,
year = {2013},
copyright = {Copyright © 2013 WILEY‐VCH Verlag GmbH & Co. KGaA, Weinheim},
issn = {0025-584X},
journal = {Mathematische Nachrichten},
keywords = {18F20 ; MSC ; Primary: 14F05 ; projective modules ; Secondary: 16D40 ; Serre-Swan theorem ; vector bundles},
language = {eng ; fre ; ger},
number = {2-3},
pages = {272-278},
publisher = {WILEY-VCH Verlag},
title = {Note on the {S}erre-{S}wan theorem},
volume = {286},
author = {Morye, Archana S.},
address = {Germany},
}

@article{AndroulidakisIakovos,
abstract = {We construct the holonomy groupoid of any singular foliation. In the regular case this groupoid coincides with the usual holonomy groupoid of Winkelnkemper ([H. E. Winkelnkemper, The graph of a foliation, Ann. Glob. Anal. Geom. 1 (3) (1983), 51–75.]); the same holds in the singular cases of [J. Pradines, How to define the differentiable graph of a singular foliation, C. Top. Geom. Diff. Cat. XXVI(4) (1985), 339–381.], [B. Bigonnet, J. Pradines, Graphe d'un feuilletage singulier, C. R. Acad. Sci. Paris 300 (13) (1985), 439–442.], [C. Debord, Local integration of Lie algebroids, Banach Center Publ. 54 (2001), 21–33.], [C. Debord, Holonomy groupoids of singular foliations, J. Diff. Geom. 58 (2001), 467–500.], which from our point of view can be thought of as being “almost regular”. In the general case, the holonomy groupoid can be quite an ill behaved geometric object. On the other hand it often has a nice longitudinal smooth structure. Nonetheless, we use this groupoid to generalize to the singular case Connes' construction of the C*-algebra of the foliation. We also outline the construction of a longitudinal pseudo-differential calculus; the analytic index of a longitudinally elliptic operator takes place in the K-theory of our C*-algebra. In our construction, the key notion is that of a bi-submersion which plays the role of a local Lie groupoid defining the foliation. Our groupoid is the quotient of germs of these bi-submersions with respect to an appropriate equivalence relation.},
year = {2009},
author = {Androulidakis, Iakovos and Skandalis, Georges},
copyright = {COPYRIGHT 2009 Walter de Gruyter GmbH & Co. KG},
issn = {0075-4102},
journal = {Journal für die reine und angewandte {M}athematik},
language = {eng},
number = {626},
pages = {1-37},
publisher = {Walter de Gruyter GmbH & Co. KG},
title = {The holonomy groupoid of a singular foliation},
note={\url{ https://doi.org/10.5167/uzh-23589}},
volume = {2009},
}

@article{Debord,
author = "Debord, Claire",
doi = "10.4310/jdg/1090348356",
fjournal = "Journal of Differential Geometry",
journal = "J. Differential Geom.",
month = "07",
number = "3",
pages = "467--500",
publisher = "Lehigh University",
title = "Holonomy {G}roupoids of {S}ingular {F}oliations",
note={\url{https://doi.org/10.4310/jdg/1090348356}},
volume = "58",
year = "2001"
}
\vfill
\begin{center}
    \textsc{Department of Mathematics, University of Illinois Urbana-Champaign\\ 1409 W. Green Street, Urbana, IL 61801, USA}
\end{center}

\end{document}